\newcommand{\Po}{\mathcal{P}}
\newcommand{\R}{\mathbb{R}}
\newcommand\footnoteref[1]{\protected@xdef\@thefnmark{\ref{#1}}\@footnotemark}
\begin{document}

\title{On branching-point selection for trilinear monomials in spatial branch-and-bound: the hull relaxation\footnote{This work extends and presents parts of the first author's doctoral dissertation  \cite{SpeakmanThesis}, and it
 corrects results first announced in the short abstract   \cite{SpeakmanLee2016}.}
}

\titlerunning{Branching-point selection for trilinear monomials}        

\author{  Emily Speakman       \and
        Jon Lee 
}


\institute{E. Speakman \at
              Dept. of Industrial and Operations Engineering,
              University of Michigan,
              Ann Arbor. \\
              \email{eespeakm@umich.edu}           
           \and
          J. Lee \at
               Dept. of Industrial and Operations Engineering,
              University of Michigan,
              Ann Arbor. \\
              \email{jonxlee@umich.edu}
}

\date{}

\maketitle

\begin{abstract}
In Speakman and Lee (2017), we analytically developed the idea of using volume as a measure for comparing relaxations in the context of spatial branch-and-bound.
Specifically, for trilinear monomials, we analytically compared the three possible  ``double-McCormick relaxations'' with the tight convex-hull relaxation. Here, again using volume as a measure, for the convex-hull relaxation of trilinear monomials,
we establish simple rules for determining the optimal branching variable and optimal branching point. Additionally, we compare our results with
current software practice.

\end{abstract}

\section{Introduction}
\label{intro}

In this article, we consider the spatial branch-and-bound (sBB) family of algorithms (see, for example, \cite{Adjiman98},\cite{Ryoo96},\cite{Smith99}, building on \cite{McCormick76}) which aim to find globally-optimal solutions of factorable mathematical-optimization formulations via a divide-and-conquer approach (building on the branch-and-bound approach for discrete optimization, see \cite{Land63} and \cite{LandDoig60}).  Implementations of these sBB algorithms for factorable formulations work by introducing auxiliary variables in such a way as to decompose every function of the original formulation which we can then view as a labeled directed graph (DAG). Leaves correspond to original model variables, and we assume that
the domain of each such model variable is a finite interval.  We have a library of basic functions, including `linear combination' of an arbitrary number variables, and other simple functions of a small number of variables.  The out-degree of each internal node, labeled by a
library function $f\in\mathcal{F}$ that is \emph{not}  `linear combination'  is typically small (say $d_f\leq 3$, for all $f\in\mathcal{F}$).
We assume that we have methods for convexifying each low-dimensional library function $f$ on an arbitrary box domain in $\mathbb{R}^{d_f}$.
From these DAGs, relaxations are composed and refined (see \cite{Belotti09}, for example).
For a given function $f$, the associated DAG can be constructed in more than one way, and therefore sBB has choices to make in  this regard.  Such choices can have a strong impact on the quality of the convex relaxation obtained from the formulation. Because sBB algorithms obtain bounds from these convex relaxations, these choices can have a significant impact on the performance of the algorithm.

There has been substantial research on how to obtain good-quality convex relaxations of graphs of low-dimensional nonlinear functions on various domains (see, for example, \cite{JMW08}, \cite{SA1990}, \cite{Meyer04a}, \cite{Meyer04b}, \cite{Rikun97}, \cite{MF1995}), and some consideration has been given to constructing DAGs in a favorable way.  In particular, in \cite{SpeakmanLee2015}, we  obtained analytic results regarding the convexifications obtained from different ways of treating trilinear monomials, $f=x_1 x_2 x_3$, on non-negative
box domains $\{{\bm{x}} \in \R^3 ~:~ x_i\in[a_i,b_i],~ i=1,2,3\}$.  We computed both the extreme point and inequality representations of the alternative relaxations (derived from iterating McCormick inequalities) and calculated their $4$-dimensional volumes (in the space of $\{(f,x_i,x_j,x_k)\in\mathbb{R}^4\}$) as a comparison measure.  Using volume as a measure gives a way to analytically compare formulations and corresponds to a uniform distribution of the optimal solution across a relaxation; when concerned with non-linear optimization, such a uniform distribution is quite natural.  This is in contrast to linear optimization, where an optimal solution can always be found at an extreme point, and therefore, the distribution of the optimal solution (or optimal solutions) is clearly not uniform across the feasible region.
Experimental corroboration for using volume as a measure of the quality of
relaxations for trilinear monomials appears in
\cite{SpeakmanYuLee2016} (also see \cite{CafieriLeeLiberti10}, concerning
quadrilinear monomials).

Along with utilizing good convex relaxations, other important issues in the effective implementation of sBB for factorable formulations are:
(i) the choice of branching variable, and (ii) the selection of the branching point.  Software developers have tuned their choice of branching point using extensive problem test beds.
It is common practice for solvers to branch on the value of the variable at the current solution, adjusted using some method to ensure that the branching point is not too close to either of the interval endpoints.  Often this is done by weighting the interval midpoint and the variable at the optimal solution of the current relaxation, and/or restricting the branching choice to a central part of the interval.  For example, in \cite{Grossmann96} (also see \cite{Epperly:1995:GON:922024}), they suggest branching at the current relaxation point when it is in the middle 60\% of the interval and failing that, branch at the midpoint.
The \verb;ooOPS; software,  see \cite{AgarwalNadeem12},
uses the
solution of an upper-bounding problem as a reference solution, if such a solution is found; otherwise the
solution of the lower-bounding relaxation is used as a reference solution.
\verb;ooOPS; then identifies the non-convex term with
the greatest separation distance with respect to its convex relaxation. The branching  variable is then chosen as the variable whose value
at the reference solution is nearest to the midpoint of its range. But it
is not clear how \verb;ooOPS; then chooses the branching point.

 \cite{SahinidisTawarmalani02} describe a typical way to avoid the interval endpoints by choosing the branching point as
\begin{equation}
\label{branchingpointformula}
 \max\bigg\{a_i+\beta(a_i-b_i),~ \min\Big\{b_i-\beta(b_i-a_i),~ \alpha\hat{x}_i+(1-\alpha)(a_i+b_i)/2\Big\}\bigg\},
\end{equation}
where $\hat{x}_i$ is the value of the branching variable $x_i$ at the current solution. 
The constants $\alpha \in [0,1]$ and $\beta \in [0, 1/2]$ are algorithm parameters. So, the branching point is the closest point in the interval
\[
[a_i+\beta(a_i-b_i),~ b_i-\beta(b_i-a_i)]
\]
to the weighted combination $\alpha\hat{x}_i+(1-\alpha)(a_i+b_i)/2$ (of the $x$-value in the current optimal solution and the interval midpoint),
thus explicitly ruling out  branching in the bottom and top $\beta$ fraction of the interval.
Note that if  $\beta \leq (1-\alpha)/2$, then there is no such \emph{explicit} restriction,
because already the weighted combination $\alpha\hat{x}_i+(1-\alpha)(a_i+b_i)/2$
precludes branching in the bottom and top $(1-\alpha)/2$ fraction of the interval.

Current available software use a variety of values for the parameters $\alpha$ and $\beta$. The method (mostly) employed by \verb;SCIP; (see \cite{Achterberg2009}, \cite{VigerskeGleixner2016} and the open-source code itself) is to
select the branching point as the closest point in the  middle 60\% of the interval
to the variable value $\hat{x}_i$.  This is equivalent to setting $\alpha=1$ and $\beta=0.2$ and gives an explicit restriction via the choice of $\beta$.    The current \emph{default} settings of \verb;ANTIGONE;\footnote{Private communication with Ruth Misener} (\cite{GloMIQO} and \cite{Misener14}), \verb;BARON;\footnote{Private communication with Nick Sahinidis} (\cite{Sahinidis1996}) and \verb;COUENNE; (see \cite{Belotti09} and the open-source code itself) all have $\beta \leq (1-\alpha)/2$, and so the default branching
point  is simply the weighted combination $\alpha\hat{x}_i+(1-\alpha)(a_i+b_i)/2$; see Table \ref{codeparams}.

\begin{table}[h!]
\begin{center}
  \begin{tabular}{|l|l|l|}
  \hline
  {\bf Solver} & {\bm $\alpha$} & {\bm $\beta$} \\ \hline
  \verb;SCIP; & $ 1.00$ & $  0.20 \quad \not\leq (1-\alpha)/2 = 0.000$  \\ \hline
  \verb;ANTIGONE; & $  0.75$ & $  0.10 \quad \leq (1-\alpha)/2 = 0.125$  \\ \hline
  \verb;BARON; & $  0.70$ & $  0.01 \quad \leq (1-\alpha)/2 = 0.150$  \\ \hline
  \verb;COUENNE; & $  0.25$ & $  0.20 \quad \leq (1-\alpha)/2 = 0.375$  \\ \hline
 \end{tabular}
 \end{center}
 \caption{Default parameter settings}\label{codeparams}
 \end{table}

The different choices are based on combinations of intuition
and substantial empirical evidence gathered by
the software developers.
We note that there is considerable variation in the settings
of these parameters, across the various software
packages. Furthermore, there are other factors (especially in
\verb;BARON;)
 that sometimes supersede selecting a branching point according to formula (\ref{branchingpointformula});
in particular, functional forms involved, the solution of the current relaxation, available incumbent solutions,
complementarity considerations, etc.
Our work is based solely on analyzing a single trilinear monomial,
after
 branching on a variable in that trilinear monomial, with the goal of
helping to guide, and in some cases mathematically
support, the choice of a branching point. Of course variables often appear in multiple functions.  So, when deciding on a branching variable or a branching point, we may obtain conflicting guidance.  But this is an issue with most branching rules, including those developed empirically, and it is always a challenge to find good ways to combine local information to make algorithmic decisions (see \cite{Adjiman98}).  We hope that our results can help influence such decisions. For example, taking weighted averages of scores based on our metric would be a reasonable way to proceed.

\section{Preliminaries}
\label{prelim}

In this work, we focus on trilinear monomials; that is, functions of the form $f=x_1x_2x_3$.  This is an important class of functions for sBB algorithms, because such monomials
may also involve auxiliary variables.  This means that whenever a formulation contains the product of three (or more) expressions (possibly complicated themselves), our results apply.


Following \cite{SpeakmanLee2015}, for the variables $x_i \in [a_i,b_i]$, $i=1,2,3$, throughout this paper we assume the following conditions hold:
\begin{equation}
\label{Omega}
\left. \begin{split}
&0 \leq a_i < b_i \;\text{for}\; i=1,2,3, \quad\text{and} \\
&a_1b_2b_3+b_1a_2a_3 ~\leq~ b_1a_2b_3 + a_1b_2a_3 ~\leq~ b_1b_2a_3 + a_1a_2b_3.
\end{split}
\right \}
\tag{$\Omega$}
\end{equation}

To see that the latter two inequalities are without loss of generality, let $\mathcal{O}_i \colonequals a_i(b_jb_k) + b_i(a_ja_k)$, for $i=1,2,3$.  Then we can construct a labeling such that $\mathcal{O}_1 \leq \mathcal{O}_2 \leq \mathcal{O}_3$.  Note that because we are only considering non-negative bounds, the latter part of this condition is equivalent to:

\begin{equation}\frac{a_1}{b_1} \leq \frac{a_2}{b_2} \leq \frac{a_3}{b_3}. \end{equation} This follows from Lemma \ref{lem91} (in the Appendix), and that $b_i>0, \;\;i=1,2,3$.  Also, \emph{it is very important to note that once we have labeled our variables to satisfy \ref{Omega}, our trilinear monomial cannot be treated as  symmetric across variables.}
This condition also arises in the complete characterization of the inequality description for the (polyhedral) convex hull of the graph of the trilinear monomial $f:=x_1x_2x_3$ (in $\R^4$) (see \cite{Meyer04a} and \cite{Meyer04b}).

We introduce the following notation for the convex hull of the graph of
$f:= x_1x_2x_3$ on a box domain:
\[
\Po_{h} \colonequals \text{conv}\left (\left\{(f,x_1,x_2,x_3) \in \R^4 : f= x_1x_2x_3, \;\; x_i \in [a_i,b_i], \; i=1,2,3 \right\}\right).
\]

Instead of referring to convex lower envelopes and concave upper envelopes, we take the view that any given monomial is likely to be composed in many different ways in a complicated formulation, and so we are agnostic about focusing on only one of convex lower envelopes and concave upper envelopes, and rather we look at the convex hull of the graph of the function on the domain of interest (and it's total volume; not just the volume below or above the graph).

The extreme points of $\Po_{h}$ are the eight points that correspond to the $2^3=8$ choices of each $x$-variable at its upper or lower bound (see \cite{Rikun97}). We label these eight points (all of the form $[x_1x_2x_3, x_1, x_2, x_3]^T$) as follows:

{\small\begin{align*}
&v^1:= \left [\begin{array} {c}
b_1a_2a_3\\
b_1\\
a_2\\
a_3
  \end{array} \right] ,\; v^2:= \left [\begin{array} {c}
a_1a_2a_3\\
a_1\\
a_2\\
a_3
  \end{array} \right], \;v^3:= \left [\begin{array} {c}
a_1a_2b_3 \\
a_1\\
a_2\\
b_3
  \end{array} \right], \;v^4:= \left [\begin{array} {cc}
a_1b_2a_3 \\
a_1\\
b_2\\
a_3
  \end{array} \right], \\[5pt] &v^5:= \left [\begin{array} {cc}
a_1b_2b_3 \\
a_1\\
b_2\\
b_3
  \end{array} \right], \;v^6:= \left [\begin{array} {cc}
b_1b_2b_3\\
b_1\\
b_2\\
b_3
  \end{array} \right], \;v^7:=  \left [\begin{array} {cc}
b_1b_2a_3 \\
b_1\\
b_2\\
a_3
  \end{array} \right], \;v^8:= \left [\begin{array} {cc}
b_1a_2b_3 \\
b_1\\
a_2\\
b_3
  \end{array} \right].
  \end{align*}}

The (complicated) inequality description of the convex hull (see \cite{Meyer04a} and \cite{Meyer04b}) is directly used by some global-optimization software (e.g., \verb;BARON; and \verb;ANTIGONE;).  However, other software packages (e.g., \verb;COUENNE; and \verb;SCIP;)  instead use McCormick inequalities iteratively to obtain a (simpler) convex relaxation for  trilinear monomials.  These alternative approaches reflect the tradeoff between using a more complicated but stronger convexification and a simpler but weaker one, especially in the context of global optimization (see \cite{Lee_2007}, for example).

From \cite{SpeakmanLee2015}, we have a formula for the volume of the convex-hull relaxation (additionally, for the various double-McCormick relaxations), parameterized in terms of the upper and lower variable bounds.

\begin{theorem}[see \cite{SpeakmanLee2015}]
\label{TheoremPH} Under \ref{Omega}, we have
\begin{dmath*}
\emph{vol}(\Po_h) = (b_1-a_1)(b_2-a_2)(b_3-a_3)\times \\ \left(b_1(5b_2b_3-a_2b_3-b_2a_3-3a_2a_3) + a_1(5a_2a_3-b_2a_3-a_2b_3-3b_2b_3)\right)/24.
\end{dmath*}
\end{theorem}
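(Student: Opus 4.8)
The plan is to exploit that $\Po_h$ fibers nicely over its projection to the $(x_1,x_2,x_3)$-subspace. Since the eight vertices $v^1,\dots,v^8$ project onto the eight vertices of the box $B:=[a_1,b_1]\times[a_2,b_2]\times[a_3,b_3]$, the image of $\Po_h$ under this projection is exactly $B$, and over a point $(x_1,x_2,x_3)\in B$ the fiber of $\Po_h$ is the segment $\{f : \underline{f}(x)\le f\le \overline{f}(x)\}$, where $\underline{f}$ and $\overline{f}$ are the convex (lower) and concave (upper) envelopes of $x_1x_2x_3$ over $B$; both are piecewise-linear, being precisely the lower and upper faces of the polytope $\Po_h$ viewed in the $f$-direction. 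Hence, by Fubini,
\[
\mathrm{vol}(\Po_h) = \int_B \big(\overline{f}(x) - \underline{f}(x)\big)\,dx_1\,dx_2\,dx_3,
\]
so it suffices to obtain explicit formulas for the two envelopes and to integrate.

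For the envelopes I would invoke the facet description of $\Po_h$ due to Meyer and Floudas \cite{Meyer04a,Meyer04b}: the lower faces of $\Po_h$ assemble into the graph of $\underline{f}$, the upper faces into the graph of $\overline{f}$, and each such facet, projected to $B$, is a cell of a polyhedral subdivision of $B$ on which the corresponding envelope equals the affine function read off from that facet. Condition $(\Omega)$ --- which is exactly $\mathcal{O}_1\le\mathcal{O}_2\le\mathcal{O}_3$ together with $0\le a_i<b_i$ --- is what pins down a single combinatorial type within the Meyer--Floudas classification (given the relabeling already performed), so the subdivision and the list of affine pieces are completely determined; the first task is to verify this and record the pieces, dispatching any residual sub-cases, which must yield the same volume. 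A useful internal check along the way: because $x_1x_2x_3$ is affine along each edge of $B$, both envelopes coincide with $x_1x_2x_3$ on the $1$-skeleton of $B$, and on each $2$-face they collapse to a scaled bilinear McCormick pair whose formulas are classical.

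With the piecewise-linear data in hand, the rest is mechanical: refine the two envelope subdivisions to a common subdivision of $B$ into simple cells (boxes, prisms, simplices), on each cell write $\overline{f}-\underline{f}$ as a single affine function, integrate it over the cell (elementary), sum over cells, and simplify. Equivalently one can bypass the envelopes and directly triangulate the $4$-dimensional polytope $\Po_h=\mathrm{conv}\{v^1,\dots,v^8\}$ into a small number of $4$-simplices on vertex subsets of $\{v^i\}$, computing each volume as $1/24$ times the modulus of the determinant of four spanning edge-vectors --- the Meyer--Floudas facets again both supply and certify a valid triangulation. Either route, the genuine obstacle is organizational rather than conceptual: correctly fixing the facet/subdivision structure under $(\Omega)$, and then pushing the case-by-case integration (or determinant sum) far enough to witness the collapse to the stated compact closed form. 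As a final sanity check, the answer should be invariant under interchanging the roles of $x_2$ and $x_3$ (it is), and should vanish to the expected order as any $b_i\to a_i$.
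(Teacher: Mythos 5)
First, a point of orientation: the paper does not actually prove Theorem~\ref{TheoremPH} --- it imports it verbatim from \cite{SpeakmanLee2015} (Theorem~4.1 there) --- so the relevant comparison is with the derivation in that reference. Your framework is the right one and is essentially the one used there: project $\Po_h$ onto the box $B$, observe that the fiber over each point of $B$ is the segment between the convex and concave envelopes of $x_1x_2x_3$, apply Fubini to reduce the volume to $\int_B\bigl(\overline{f}-\underline{f}\bigr)$, and read the piecewise-linear envelopes off the Meyer--Floudas facet description under \ref{Omega}. There is no conceptual error in this setup, and the alternative route you mention (triangulating the $4$-polytope on the eight vertices and summing $\tfrac{1}{24}\lvert\det\rvert$ terms) is equally legitimate.

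The gap is that everything constituting the actual proof of \emph{this} statement is deferred. The entire content of the theorem is the explicit closed form, and your proposal never writes down the envelopes, never fixes the polyhedral subdivision of $B$ that they induce, never performs the integration, and never carries out the simplification to the stated expression; ``the rest is mechanical'' is doing all of the work. Two concrete risks hide in that deferral. (i) The Meyer--Floudas characterization is itself case-based; you assert that \ref{Omega} ``pins down a single combinatorial type'' and then concede there may be ``residual sub-cases.'' That tension must be resolved, not waved at: if the facet structure genuinely varies within \ref{Omega}, the integral must be evaluated separately in each case and the results shown to coincide with the single stated formula. (ii) The pronounced asymmetry of the result in $x_1$ versus $x_2,x_3$ --- the feature the present paper's entire analysis rests on --- emerges only from the explicit computation; your proposed sanity checks (symmetry in $x_2,x_3$, vanishing as $b_i\to a_i$) are necessary but nowhere near sufficient to certify the coefficients $5,-1,-1,-3$. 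As written, this is a correct and well-chosen plan of attack, but not yet a proof of the formula.
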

Note that due to the asymmetry introduced by \ref{Omega}, the formula does not treat all variables in the same manner. In particular, the role of $x_1$ is quite different than the
roles of $x_2$ and $x_3$ (which can be interchanged). This observation is very important in our analysis that follows.

In the context of branching within sBB,
let $c_i \in [a_i,b_i]$ be the branching point of variable $x_i$.
We obtain two children. By substituting $a_i=c_i$ and $b_i=c_i$ (respectively) for a given variable $x_i$ into the \emph{appropriate formula} (i.e., Theorem \ref{TheoremPH} \emph{after a possible relabeling of the variables}), and summing the results, we obtain the total resulting volume of the relaxations at the two child nodes, given that we branch on variable $x_i$ at point $c_i$. It is important to realize that the volume formula only holds when the labeling \ref{Omega} is respected.  Given when we branch, the bounds in our problem change, we must be careful to ensure that we always use the formula correctly (i.e. if necessary, we relabel the variables to ensure that \ref{Omega} holds).


 In \S\ref{x1}, we present our results analyzing optimal branching-point selection for $x_1$. Then, in \S\ref{x2x3}, we present the analysis for  $x_2$ and $x_3$.  Due to the special role of $x_1$, we will see that the analysis (and even the result) is significantly simpler
  for  $x_2$ and $x_3$ than for $x_1$.  In \S\ref{compare},
we analyze branching-variable selection, and in particular, we demonstrate that it
is always best to branch on $x_1$. In  \S\ref{conclusion}, we
 make some concluding remarks. In the Appendix, we
  provide proofs of various technical results that we utilize.

\section{Branching on $x_1$}
\label{x1}

First, we define the following quantities (note that because we assume $b_i>a_i,\; i=1,2,3$, the denominators will not be zero for any valid parameter choice):

\begin{dmath}
\label{q1}
q_1 \colonequals \frac{3a_1a_2a_3+a_1a_2b_3-a_1b_2a_3-3a_1b_2b_3+4b_1a_2a_3-4b_1b_2b_3}{2(3a_2a_3+a_2b_3-4b_2b_3)}; \end{dmath}
\begin{dmath}
\label{q2}
q_2 \colonequals \frac{a_1+b_1}{2};
\end{dmath}
\begin{dmath}
\label{q3}
q_3 \colonequals \frac{4a_1a_2a_3-4a_1b_2b_3+3b_1a_2a_3+b_1a_2b_3-b_1b_2a_3-3b_1b_2b_3}{2(4a_2a_3-b_2a_3-3b_2b_3)}.
\end{dmath}
Next, we refer to Procedure \ref{fig:flowchart} which depicts a procedure for choosing a branching point when branching on variable $x_1$.
Note that $q_1$ is not used in the procedure, but it is used in the analysis of the procedure.

\begin{procedure}[h!]
\begin{center}
     \includegraphics[width=0.95\textwidth,keepaspectratio]{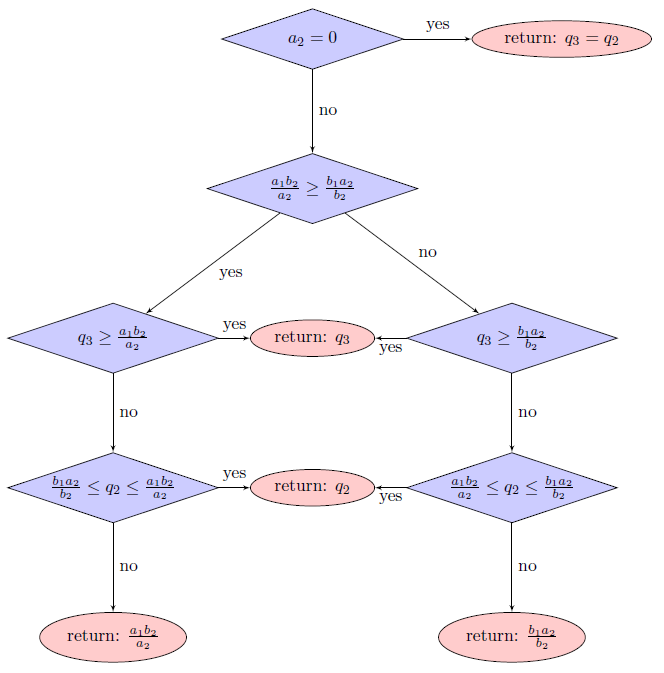}
      \caption{Output is the optimal branching point when branching on variable $x_1$}
    \label{fig:flowchart}
 \end{center}
 \end{procedure}

 First, consider what happens when we pick a branching variable $x_i$, and branch at a given point $c_i$: we obtain two children, now with different bounds on the branching variable.  The upper bound of the branching variable in the \emph{left child} becomes the value of the branching point, as does the lower bound of the branching variable in the \emph{right child}.
That is, the domain of $x_i$ for the left child is $[a_i,c_i]$,
and the domain of $x_i$ for the right child is $[c_i,b_i]$.
 We reconvexify the two children using our chosen method of convexification (i.e., the convex hull), and we can sum the volumes from both children to obtain the total volume when branching at that given point.  We are interested in finding the branching point that leads to the least total volume.  For an example of this principle in a lower dimension, see the diagram of Figure \ref{figsBB} which illustrates reconvexifying after branching in sBB.  Here, because we have a one dimensional function, the graph of the function is a set in $\R^2$.  Therefore, in the context of this diagram, we wish to find the branching point that minimizes the sum of the areas of the two diagonally striped (green) regions.  Clearly this depends on the choice of convexification method.

\begin{figure}[h!t]
\begin{center}
    \includegraphics[width=0.7\textwidth,keepaspectratio]{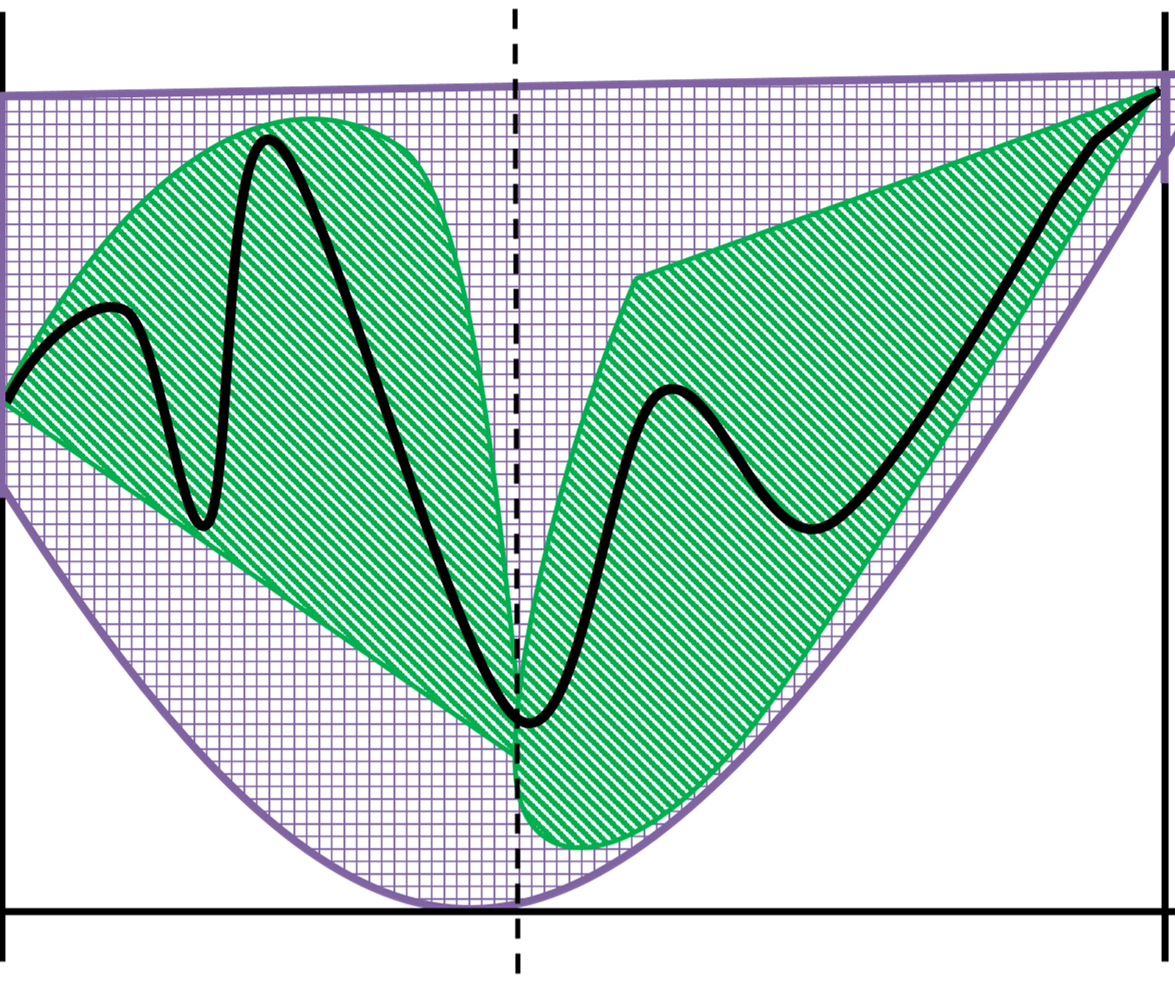}
       \caption{Illustration of sBB for a 1-dimensional function}
    \label{figsBB}
    \end{center}
    \vspace{5mm}
 \end{figure}

We can compute the volume of the relaxation for each of the children using Theorem \ref{TheoremPH} (i.e., Theorem 4.1 from \cite{SpeakmanLee2015}).
To ensure that we compute the appropriate volumes, we need to check that as the bounds on the branching variable change, we still respect the labeling \ref{Omega}.  To illustrate this, consider the left child obtained by branching on variable $x_1$ at some point $c_1 \in [a_1,b_1]$.  For this left child, the lower bound on the branching variable remains the same and the new upper bound is $c_1$.  We can see that if $c_1$ is `close enough' to $b_1$, then \ref{Omega} will  remain satisfied, however as $c_1$ decreases, there comes a point where  the labeling must change.  By simple algebra, we calculate that this critical point is when $\frac{a_1}{c_1} = \frac{a_2}{b_2} \Leftrightarrow c_1 = \frac{a_1b_2}{a_2}$ (assuming for now that $a_2>0$).  We can consider the right child in the same manner.  On the right, the upper bound on the branching variable remains the same, and the new lower bound is $c_1$.  When $c_1$ is close to $a_1$, \ref{Omega} will  remain satisfied; however, as $c_1$ becomes larger, eventually the labeling must change.  This critical point for the right child is at $\frac{c_1}{b_1} = \frac{a_2}{b_2} \Leftrightarrow c_1 = \frac{b_1a_2}{b_2}$.

We note that because of the structure of the volume function of the convex hull, (see Theorem \ref{TheoremPH}), the second and third variables are interchangeable.  This means that we do not need to consider what happens when the bounds vary enough for $x_1$ to be relabeled as $x_3$.

Before stating Theorem \ref{x1flowchart}, we need to clarify some definitions and state four technical lemmas that will be needed in the proof.  The proofs of these four lemmas can be found in the Appendix.

We first define \begin{dmath*}
V(l_1,u_1,l_2,u_2,l_3,u_3) \colonequals (u_1-l_1)(u_2-l_2)(u_3-l_3)\times
\left(u_1(5u_2u_3 - l_2u_3 - u_2l_3 - 3l_2l_3) + l_1(5l_2l_3 - u_2l_3 - l_2u_3 -3u_2u_3)\right)/24,
\end{dmath*}
to be the volume of the convex hull with variable lower bounds $l_i$ and upper bounds, $u_i$, for $i=1,2,3$.

Then, we define the following parameterized function:

\begin{dmath}
\label{TV1}
TV(c_1) \colonequals \begin{cases} V_1(c_1),  & a_1 \leq c_1 \leq \frac{b_1a_2}{b_2}; \\
V_2(c_1),& \frac{b_1a_2}{b_2} < c_1 < \frac{a_1b_2}{a_2}; \\
V_3(c_1),&   \frac{a_1b_2}{a_2} \leq c_1 \leq b_1,           \end{cases}
\end{dmath}

\noindent where:
\begin{align*}V_1(c_1) &\colonequals V(a_2,b_2,a_1,c_1,a_3,b_3) + V(c_1,b_1,a_2,b_2,a_3,b_3), \\
V_2(c_1) &\colonequals V(a_2,b_2,a_1,c_1,a_3,b_3) + V(a_2,b_2,c_1,b_1,a_3,b_3),  \\
V_3(c_1) &\colonequals V(a_1,c_1,a_2,b_2,a_3,b_3) + V(a_2,b_2,c_1,b_1,a_3,b_3).\end{align*}

And finally the second parameterized function:

\begin{dmath}
\label{TV2}
\widehat{TV}(c_1) \colonequals \begin{cases} V_1(c_1)  & a_1 \leq c_1 \leq \frac{b_1a_2}{b_2}; \\
V_4(c_1)& \frac{b_1a_2}{b_2} < c_1 < \frac{a_1b_2}{a_2}; \\
V_3(c_1)&   \frac{a_1b_2}{a_2} \leq c_1 \leq b_1,           \end{cases}
\end{dmath}

\noindent where $V_1(c_1)$ and $V_3(c_1)$ are defined as before and:
$$V_4(c_1) \colonequals V(a_1,c_1,a_2,b_2,a_3,b_3) + V(c_1,b_1,a_2,b_2,a_3,b_3).$$

Both $TV(c_1)$ and $\widehat{TV}(c_1)$ are piecewise-quadratic functions in $c_1$. We can easily observe this by noticing that $V$ is the product of a pair of multilinear functions in the parameters.

\begin{lemma}
\label{HlessB}
Given that the upper- and lower-bound parameters respect the labeling \ref{Omega}, and $\frac{b_1a_2}{b_2} \leq \frac{a_1b_2}{a_2}$,
\begin{equation*}V_1\left(\frac{b_1a_2}{b_2}\right)=V_2\left(\frac{b_1a_2}{b_2}\right) \geq V_2\left(\frac{a_1b_2}{a_2}\right) = V_3\left(\frac{a_1b_2}{a_2}\right).
\end{equation*}
\end{lemma}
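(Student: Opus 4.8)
The plan is to prove the two equalities first and then the single inequality in the middle, treating each separately. The two equalities are essentially continuity/consistency statements about the piecewise definition of $TV(c_1)$ in \eqref{TV1}: at the breakpoint $c_1 = b_1a_2/b_2$ the formulas $V_1$ and $V_2$ share the term $V(a_2,b_2,a_1,c_1,a_3,b_3)$, so the claim $V_1(b_1a_2/b_2) = V_2(b_1a_2/b_2)$ reduces to checking that $V(c_1,b_1,a_2,b_2,a_3,b_3) = V(a_2,b_2,c_1,b_1,a_3,b_3)$ when $c_1 = b_1a_2/b_2$. Since $V$ is just the volume formula from Theorem~\ref{TheoremPH} applied to a relabeled box, this is exactly the statement that at $c_1 = b_1a_2/b_2$ we have $c_1/b_1 = a_2/b_2$, i.e.\ the boundary case of condition \ref{Omega} where swapping the roles of the first and second variable is legitimate; I would verify it by substituting $c_1 = b_1a_2/b_2$ directly into both instances of the $V$ formula and simplifying. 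The equality $V_2(a_1b_2/a_2) = V_3(a_1b_2/a_2)$ is handled symmetrically: $V_2$ and $V_3$ share the term $V(a_2,b_2,c_1,b_1,a_3,b_3)$, and at $c_1 = a_1b_2/a_2$ we have $a_1/c_1 = a_2/b_2$, which makes $V(a_2,b_2,a_1,c_1,a_3,b_3) = V(a_1,c_1,a_2,b_2,a_3,b_3)$ by the same relabeling-boundary argument.

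The substantive part is the middle inequality $V_2(b_1a_2/b_2) \geq V_2(a_1b_2/a_2)$, which says that the quadratic $V_2$ on the interval $[\,b_1a_2/b_2,\ a_1b_2/a_2\,]$ is at least as large at the left endpoint as at the right endpoint. Here $V_2(c_1) = V(a_2,b_2,a_1,c_1,a_3,b_3) + V(a_2,b_2,c_1,b_1,a_3,b_3)$, a quadratic in $c_1$ whose leading coefficient I would compute first (it is built from products of the multilinear pieces of $V$, and its sign is determined by the bound inequalities in \ref{Omega}). My approach is to form the difference $V_2(b_1a_2/b_2) - V_2(a_1b_2/a_2)$, factor out the common factor $\bigl(a_1b_2/a_2 - b_1a_2/b_2\bigr)$ (nonnegative by the hypothesis $b_1a_2/b_2 \leq a_1b_2/a_2$), and show the remaining factor is nonnegative. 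Concretely, since $V_2$ is quadratic, $V_2(s) - V_2(t) = (s-t)\bigl(A(s+t) + B\bigr)$ where $A$ is the leading coefficient and $B$ the linear coefficient; with $s = b_1a_2/b_2$ and $t = a_1b_2/a_2$, I need $A(s+t) + B$ to have the right sign. I expect this to come down to a polynomial inequality in $a_1,b_1,a_2,b_2,a_3,b_3$ that follows from \ref{Omega} — in particular from $a_1/b_1 \leq a_2/b_2$ and the nonnegativity of all bounds — possibly after clearing the denominators $a_2 b_2$.

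The main obstacle is the final polynomial-inequality verification: after substituting the endpoint values and clearing denominators, one gets a fairly large symmetric-ish expression in six variables, and the challenge is to exhibit it as a manifestly nonnegative combination (a sum of products of the differences $b_i - a_i$ and of the quantities $b_1b_2 - a_1a_2$, $b_ib_j - a_ia_j$, etc., all of which are nonnegative under \ref{Omega}). I would look for the right grouping by first specializing $a_3 = 0$ or $b_3 = 1$ to see the structure, then reintroducing the $x_3$-dependence (which enters only through the common multilinear factor in $u_3, l_3$). The sign of the leading coefficient $A$ of $V_2$ also needs care: if $A \leq 0$ then $V_2$ is concave and a different (easier) argument via the sign of $s+t$ relative to the vertex suffices; if $A > 0$ the factored-difference computation above is the way. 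I would resolve which case holds by inspecting the coefficient $5u_2u_3 - l_2u_3 - u_2l_3 - 3l_2l_3$ appearing in $V$ together with the sign pattern forced by \ref{Omega}, and present whichever branch is needed. Throughout, I would lean on the already-established interchangeability of the second and third variables noted after Theorem~\ref{TheoremPH} to reduce notational overhead.
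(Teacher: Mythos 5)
Your proposal follows essentially the same route as the paper: verify the two breakpoint equalities (which hold precisely because at $c_1=b_1a_2/b_2$ and $c_1=a_1b_2/a_2$ the ratio condition makes the relabeling of the first two variable blocks legitimate), then show $V_2\left(\frac{b_1a_2}{b_2}\right)-V_2\left(\frac{a_1b_2}{a_2}\right)\geq 0$ by direct computation and factorization. The polynomial inequality you flag as the main obstacle in fact comes out cleanly: the difference equals $\frac{(b_3-a_3)(b_2-a_2)^2(b_1a_2-a_1b_2)(a_1b_2^2-a_2^2b_1)\left(3(b_2b_3-a_2a_3)+b_2a_3-a_2b_3\right)}{12a_2^2b_2^2}$, each factor nonnegative under \ref{Omega} (via Lemma \ref{lem91}) and the Case 1 hypothesis $b_1a_2^2\leq a_1b_2^2$, so no case split on the sign of the leading coefficient is needed.
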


\begin{lemma}
\label{HlessB2}
Given that the upper- and lower-bound parameters respect the labeling \ref{Omega}, and $\frac{b_1a_2}{b_2} > \frac{a_1b_2}{a_2}$,
\begin{equation*}
V_1\left(\frac{a_1b_2}{a_2}\right)=V_4\left(\frac{a_1b_2}{a_2}\right) \geq V_4\left(\frac{b_1a_2}{b_2}\right) = V_3\left(\frac{b_1a_2}{b_2}\right).
\end{equation*}
\end{lemma}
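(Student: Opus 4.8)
The plan is to treat the chain as three separate claims --- the equality $V_1\!\left(\tfrac{a_1b_2}{a_2}\right)=V_4\!\left(\tfrac{a_1b_2}{a_2}\right)$, the inequality $V_4\!\left(\tfrac{a_1b_2}{a_2}\right)\ge V_4\!\left(\tfrac{b_1a_2}{b_2}\right)$, and the equality $V_4\!\left(\tfrac{b_1a_2}{b_2}\right)=V_3\!\left(\tfrac{b_1a_2}{b_2}\right)$ --- and to dispatch them by short direct computations. Under the present hypothesis $\tfrac{a_1b_2}{a_2}<\tfrac{b_1a_2}{b_2}$, so $V_4$ is the middle piece of $\widehat{TV}$ and what is being claimed is continuity at the two breakpoints plus a definite sign for the change of $V_4$ across the middle interval.

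For the two equalities I would subtract, noting that each difference loses a common summand: $V_1(c_1)-V_4(c_1)=V(a_2,b_2,a_1,c_1,a_3,b_3)-V(a_1,c_1,a_2,b_2,a_3,b_3)$, and $V_4(c_1)-V_3(c_1)=V(c_1,b_1,a_2,b_2,a_3,b_3)-V(a_2,b_2,c_1,b_1,a_3,b_3)$. At $c_1=\tfrac{a_1b_2}{a_2}$ the intervals $[a_1,c_1]$ and $[a_2,b_2]$ have equal endpoint-ratio $\tfrac{a_2}{b_2}$; at $c_1=\tfrac{b_1a_2}{b_2}$ the intervals $[c_1,b_1]$ and $[a_2,b_2]$ have equal endpoint-ratio $\tfrac{a_2}{b_2}$. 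In each case \emph{both} ways of placing these two intervals into the first two argument slots of $V$ (with $[a_3,b_3]$ third) respect \ref{Omega}, because $\tfrac{a_2}{b_2}\le\tfrac{a_2}{b_2}\le\tfrac{a_3}{b_3}$. Since $\mathrm{vol}(\Po_h)$ is invariant under permuting $x_1,x_2,x_3$ and, by Theorem \ref{TheoremPH}, $V$ returns this true volume exactly when its arguments respect \ref{Omega}, the two $V$-values being differenced are both equal to that common volume; hence each difference vanishes. (If $a_1=0$ the breakpoint $\tfrac{a_1b_2}{a_2}$ equals $a_1$, the interval $[a_1,c_1]$ degenerates, and $V_1(a_1)=V_4(a_1)$ is immediate since the factor $c_1-a_1$ annihilates the term in which $V_1$ and $V_4$ differ.)

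For the inequality I would pin down the shape of $V_4$ in $c_1$. Writing $V_4(c_1)=V(a_1,c_1,a_2,b_2,a_3,b_3)+V(c_1,b_1,a_2,b_2,a_3,b_3)$, factoring the common positive constant $K:=(b_2-a_2)(b_3-a_3)/24$ out of both terms, and setting $P:=5b_2b_3-a_2b_3-b_2a_3-3a_2a_3$ and $Q:=5a_2a_3-b_2a_3-a_2b_3-3b_2b_3$, a one-line expansion gives
\[ V_4(c_1)=K\big((P-Q)\,c_1(c_1-a_1-b_1)+b_1^2P-a_1^2Q\big), \]
where $P-Q=8(b_2b_3-a_2a_3)>0$. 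Thus $V_4$ is an upward parabola with vertex at $c_1=\tfrac{a_1+b_1}{2}=q_2$, and for all $p,r$ one has $V_4(p)-V_4(r)=K(P-Q)(p-r)\,(p+r-a_1-b_1)$. Taking $p=\tfrac{a_1b_2}{a_2}$ and $r=\tfrac{b_1a_2}{b_2}$, where $p<r$ by hypothesis, the desired $V_4(p)\ge V_4(r)$ is equivalent to $p+r\le a_1+b_1$, i.e.\ to $\tfrac{a_1b_2}{a_2}+\tfrac{b_1a_2}{b_2}\le a_1+b_1$; clearing the positive denominator $a_2b_2$ and cancelling the positive factor $b_2-a_2$ reduces this to $a_1b_2\le a_2b_1$, i.e.\ $\tfrac{a_1}{b_1}\le\tfrac{a_2}{b_2}$, which is part of \ref{Omega}. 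Assembling the three pieces proves the lemma.

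The genuinely delicate point is not any of the algebra but the justification of the two continuity equalities: one must not simply substitute into the closed form for $V$, since that form equals the actual convex-hull volume only on the region where \ref{Omega} holds; the resolution is that each breakpoint sits exactly on the boundary of the \ref{Omega}-region, where the two competing labelings coincide, so both $V$-expressions legitimately evaluate the same permutation-invariant volume. Everything else --- expanding $V_4$, reading off the sign of $P-Q$, and the final elementary manipulation of the breakpoint inequality --- is routine.
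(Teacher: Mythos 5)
Your proof is correct and follows essentially the same route as the paper's: the paper simply writes down $V_4\left(\frac{a_1b_2}{a_2}\right)-V_4\left(\frac{b_1a_2}{b_2}\right)$ in the factored form $\frac{(b_3-a_3)(b_2-a_2)^2(b_1a_2^2-a_1b_2^2)(b_1a_2-a_1b_2)(b_2b_3-a_2a_3)}{3a_2^2b_2^2}\geq 0$, and your vertex-of-the-parabola computation reproduces exactly this identity (up to the positive denominator $a_2b_2$, your $r-p$ and $a_1+b_1-p-r$ are the factors $b_1a_2^2-a_1b_2^2$ and $(b_2-a_2)(b_1a_2-a_1b_2)$), with the sign coming from the same two facts, namely the Case~2 hypothesis and $b_1a_2\geq a_1b_2$ from Lemma~\ref{lem91}. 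The only real difference is in the two boundary equalities, which the paper dismisses as ``easy to check'' by substitution while you justify them via permutation-invariance of the true hull volume on the boundary of the \ref{Omega}-region (together with an explicit treatment of the degenerate case $a_1=0$); this is a cleaner explanation of the same fact and is consistent with the identities $V_1(c_1)-V_4(c_1)$ and $V_3(c_1)-V_4(c_1)$ computed in the paper's proof of Theorem~\ref{globalconvexity}.
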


  \begin{lemma}
 \label{c1q1}
 Given that the parameters satisfy the conditions \ref{Omega}, and furthermore, $\frac{b_1a_2}{b_2} \leq \frac{a_1b_2}{a_2}$, we have $$q_1 \geq \frac{b_1a_2}{b_2}.$$
 \end{lemma}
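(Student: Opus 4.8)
The plan is to treat this as a pure algebraic inequality in the nine quantities $a_i,b_i$ (for $i=1,2,3$) and the derived products appearing in the definition of $q_1$ and in the hypothesis $\frac{b_1a_2}{b_2}\le\frac{a_1b_2}{a_2}$. Recall from \eqref{q1} that
\[
q_1=\frac{3a_1a_2a_3+a_1a_2b_3-a_1b_2a_3-3a_1b_2b_3+4b_1a_2a_3-4b_1b_2b_3}{2(3a_2a_3+a_2b_3-4b_2b_3)}.
\]
The first step is to establish the sign of the denominator: since \ref{Omega} forces $a_2\le b_2$ and $a_3\le b_3$ with all quantities nonnegative, the quantity $3a_2a_3+a_2b_3-4b_2b_3$ is at most $3b_2b_3+b_2b_3-4b_2b_3=0$, and it is strictly negative because $b_2>a_2$ (so the inequality $a_2b_3\le b_2b_3$ is strict when $b_3>0$, which holds). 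Hence $2(3a_2a_3+a_2b_3-4b_2b_3)<0$, and the claimed inequality $q_1\ge\frac{b_1a_2}{b_2}$ is equivalent (after multiplying through by this negative denominator and reversing the inequality) to
\[
3a_1a_2a_3+a_1a_2b_3-a_1b_2a_3-3a_1b_2b_3+4b_1a_2a_3-4b_1b_2b_3
\;\le\; \frac{b_1a_2}{b_2}\cdot 2(3a_2a_3+a_2b_3-4b_2b_3).
\]

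Next I would clear the remaining denominator by multiplying both sides by $b_2>0$, producing a polynomial inequality $P(a,b)\le 0$ that is homogeneous of degree $4$ in the bounds. The idea is then to rewrite $P$ so that the hypothesis $\frac{b_1a_2}{b_2}\le\frac{a_1b_2}{a_2}$, equivalently $b_1a_2^2\le a_1b_2^2$ (after clearing $a_2b_2>0$), can be substituted. Concretely, I expect $P$ to split into a part that is manifestly $\le 0$ using only \ref{Omega} (for instance grouping terms as $-(b_1-a_1)(\text{something nonnegative})$ and using $b_2\ge a_2$, $b_3\ge a_3$), plus a residual term carrying a factor of the form $(a_1b_2^2-b_1a_2^2)$ or $(a_1b_2-b_1a_2)$ times a nonnegative coefficient built from $a_3,b_3$. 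Since the hypothesis says exactly that $a_1b_2^2-b_1a_2^2\ge 0$ — wait, that gives the wrong sign for a ``$\le 0$'' target — so more likely the correct grouping produces a factor that the hypothesis forces to have the helpful sign; identifying that grouping is the crux.

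The main obstacle I anticipate is precisely this bookkeeping: finding the right certificate that writes $-P(a,b)$ as a nonnegative combination of the ``atoms'' $(b_i-a_i)\ge 0$ and $(a_1b_2^2-b_1a_2^2)\ge 0$ (and products thereof) with polynomial coefficients that are themselves visibly nonnegative under \ref{Omega}. Because the variables $x_2$ and $x_3$ are interchangeable in the volume formula of Theorem \ref{TheoremPH} but $q_1$'s numerator is \emph{not} symmetric in $b_2,b_3$ versus $a_2,a_3$, I would be careful to exploit \ref{Omega}'s ordering $\frac{a_1}{b_1}\le\frac{a_2}{b_2}\le\frac{a_3}{b_3}$ rather than any spurious symmetry. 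A practical tactic: substitute $b_1=a_1+\delta_1$, $b_2=a_2+\delta_2$, $b_3=a_3+\delta_3$ with $\delta_i\ge 0$, expand, and check that every monomial in the $a_i,\delta_i$ expansion of $-P$ has a nonnegative coefficient after also invoking the hypothesis in the form $a_1(a_2+\delta_2)^2\ge (a_1+\delta_1)a_2^2$; if a few stubborn monomials remain, they should be absorbable using that single extra inequality. Once $P\le 0$ is verified, undoing the two multiplications (by $b_2>0$ and by the negative denominator) yields $q_1\ge\frac{b_1a_2}{b_2}$, completing the proof. \qed
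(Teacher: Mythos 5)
There is a genuine gap: your proposal is a plan for a proof, not a proof. You correctly observe that the denominator $2(3a_2a_3+a_2b_3-4b_2b_3)$ is strictly negative under \ref{Omega} (since $3a_2a_3+a_2b_3\le 4b_2b_3$ with strict inequality in the second term), and the reduction to a degree-four polynomial inequality $P\le 0$ after clearing $b_2>0$ is sound. But the entire content of the lemma lies in the nonnegativity certificate for $-P$, and you never exhibit it; you explicitly call finding the right grouping ``the crux'' and you flag, without resolving, the fact that the Case~1 hypothesis in the form $a_1b_2^2-b_1a_2^2\ge 0$ appears to point in the unhelpful direction. Until that decomposition is written down and each coefficient is verified nonnegative under \ref{Omega}, nothing has been proved. (The substitution $b_i=a_i+\delta_i$ is also not guaranteed to yield termwise-nonnegative coefficients; inequalities of this type in the paper typically need Lemma \ref{lem91} or Lemma \ref{lem94}, not just $\delta_i\ge 0$.)

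You should also know that the paper proves this lemma in two lines by reusing results you already have available, with no new polynomial manipulation at all. First, Equation \ref{q1more} gives $q_1-q_2=\frac{(b_3-a_3)(b_1a_2-a_1b_2)}{2(4b_2b_3-a_2b_3-3a_2a_3)}\ge 0$, so $q_1\ge q_2=\frac{a_1+b_1}{2}$. Second, the contradiction argument inside the proof of Theorem \ref{upperbound} (setting $X=b_2-a_2\ge 0$ and $Y=b_1a_2-a_1b_2\ge 0$ and deriving $Y>b_1X$ together with $Y<a_1X$) shows the midpoint cannot lie strictly below \emph{both} $\frac{b_1a_2}{b_2}$ and $\frac{a_1b_2}{a_2}$, i.e.\ $q_2\ge\min\left\{\frac{b_1a_2}{b_2},\frac{a_1b_2}{a_2}\right\}$. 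Under the Case~1 hypothesis $\frac{b_1a_2}{b_2}\le\frac{a_1b_2}{a_2}$ this minimum is $\frac{b_1a_2}{b_2}$, and chaining the two inequalities gives $q_1\ge\frac{b_1a_2}{b_2}$. If you want to keep a self-contained direct argument, the practical fix is to split your target into exactly these two intermediate inequalities, each of which is a short computation, rather than attacking the single quartic inequality head-on.
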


\begin{lemma}
  \label{c2q1}
  Given that the parameters satisfy the conditions \ref{Omega}, and furthermore, $\frac{b_1a_2}{b_2} \geq \frac{a_1b_2}{a_2}$, we have $$q_1 \geq \frac{a_1b_2}{a_2}.$$
 \end{lemma}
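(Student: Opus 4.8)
The plan is to clear denominators, turning the claim into a single polynomial inequality in the bound parameters, and then finish with a factorization. Since the statement involves $a_1b_2/a_2$, we may take $a_2>0$; combined with $a_2<b_2$ and $a_3<b_3$ from \ref{Omega}, this shows the denominator $D := 2(3a_2a_3 + a_2b_3 - 4b_2b_3)$ of $q_1$ is strictly negative (dividing by $2b_2b_3>0$ it equals $3\frac{a_2a_3}{b_2b_3}+\frac{a_2}{b_2}-4 \le 4\frac{a_2}{b_2}-4 < 0$). Writing $q_1 = N/D$ and multiplying the target inequality $q_1 \ge a_1b_2/a_2$ through by $D<0$ (which reverses it) and then by $a_2>0$, the claim becomes
\[
a_2 N - a_1 b_2 D \le 0 .
\]

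Next I would expand $a_2 N - a_1 b_2 D$ and collect it by powers of $b_1$. The result is
\[
a_2 N - a_1 b_2 D = a_1\big(3a_2^2a_3 + a_2^2b_3 - 7a_2b_2a_3 - 5a_2b_2b_3 + 8b_2^2b_3\big) + 4a_2(a_2a_3-b_2b_3)\,b_1 ,
\]
and the coefficient $4a_2(a_2a_3-b_2b_3)$ of $b_1$ is negative. This is precisely where the extra hypothesis is used: $\frac{b_1a_2}{b_2}\ge\frac{a_1b_2}{a_2}$ is exactly $b_1 \ge a_1b_2^2/a_2^2$, and since $b_1$ appears with a negative coefficient, substituting this lower bound gives an upper bound for the left-hand side, namely $\frac{a_1}{a_2}\big(a_2 P + 4b_2^2(a_2a_3-b_2b_3)\big)$ where $P := 3a_2^2a_3 + a_2^2b_3 - 7a_2b_2a_3 - 5a_2b_2b_3 + 8b_2^2b_3$. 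As $a_1\ge 0$ and $a_2>0$, it remains to prove $a_2 P + 4b_2^2(a_2a_3-b_2b_3) \le 0$.

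To finish, divide this last expression by $b_2^3b_3>0$ and set $t := a_2/b_2\in(0,1)$ and $s := a_3/b_3\in[0,1)$ (both well defined and in these ranges by \ref{Omega}); the inequality becomes $g(t,s)\le 0$ for $g(t,s) := (3s+1)t^3 - (7s+5)t^2 + (4s+8)t - 4$. The key observation is that $g(1,s)=0$ identically — that is, $g$ vanishes on the locus $a_2=b_2$ — so $(t-1)$ divides $g$, and carrying out the division gives
\[
g(t,s) = (t-1)\big[(3s+1)t^2 - 4(s+1)t + 4\big].
\]
The bracketed quadratic in $t$ has positive leading coefficient and discriminant $16(s+1)^2-16(3s+1) = 16\,s(s-1) \le 0$ for $s\in[0,1]$, hence is everywhere nonnegative, while $t-1<0$; therefore $g(t,s)\le 0$, which proves the lemma. (Notably, the ordering $a_2/b_2\le a_3/b_3$ in \ref{Omega} is not needed for this final step.)

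The bulk of the work consists of two mechanical polynomial expansions — collecting $a_2N-a_1b_2D$ by powers of $b_1$, and normalizing $a_2P+4b_2^2(a_2a_3-b_2b_3)$ — so the main obstacle is not conceptual but bookkeeping: spotting that $g$ has $t=1$ as a root (so that $(t-1)$ factors off and the sign of the remainder follows from a discriminant computation), and keeping the direction of the inequality correct when multiplying through by the negative quantity $D$. A completely analogous argument, using Lemma \ref{c1q1}'s hypothesis in place of this one, should handle the companion bound.
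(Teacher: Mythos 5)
Your proof is correct, but it follows a genuinely different route from the paper's. The paper derives this lemma almost for free from machinery it has already built: the proof of Theorem \ref{upperbound} shows (by a short contradiction argument) that the midpoint $q_2=\tfrac{a_1+b_1}{2}$ cannot lie below both $\tfrac{a_1b_2}{a_2}$ and $\tfrac{b_1a_2}{b_2}$, and Equation \eqref{q1more} gives $q_1\ge q_2$; hence $q_1\ge\min\bigl\{\tfrac{a_1b_2}{a_2},\tfrac{b_1a_2}{b_2}\bigr\}$, which under the Case~2 hypothesis is exactly $\tfrac{a_1b_2}{a_2}$. Your argument is instead self-contained and computational: you clear the (correctly signed, negative) denominator of $q_1$, exploit linearity in $b_1$ with a negative coefficient to substitute the lower bound $b_1\ge a_1b_2^2/a_2^2$ supplied by the hypothesis, and reduce to the two-variable inequality $g(t,s)\le 0$, which you settle by spotting the root $t=1$ and a discriminant computation. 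I verified the expansion of $a_2N-a_1b_2D$, the factorization $g(t,s)=(t-1)\bigl[(3s+1)t^2-4(s+1)t+4\bigr]$, and the discriminant $16s(s-1)\le 0$; all are correct. What each approach buys: the paper's is shorter and reuses shared lemmas (the same $\min$ bound disposes of both \ref{c1q1} and \ref{c2q1} at once), while yours is independent of Theorem \ref{upperbound} and, as you note, needs only $0\le a_i<b_i$, $a_2>0$ and the case hypothesis --- not the ordering inequalities of \ref{Omega} --- so it establishes a marginally more general fact. One caveat for the companion Lemma \ref{c1q1}: the ``completely analogous argument'' you sketch would substitute $b_1\ge a_1$ rather than a hypothesis-driven bound tied to $\tfrac{b_1a_2}{b_2}$, so the reduction there is not literally symmetric and would need its own check.
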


We are now ready to state the theorem.

\begin{theorem}
\label{x1flowchart}
Assume initial bounds $a_1,b_1,a_2,b_2,a_3,b_3$ that satisfy \ref{Omega} and that we branch on $x_1$.  Furthermore, assume that $q_2$ and $q_3$ are defined as in Equation \ref{q2} and Equation \ref{q3}.  Procedure \ref{fig:flowchart} gives the optimal branching point with respect to minimizing the sum of the volumes of the two convex-hull relaxations of the children.
\end{theorem}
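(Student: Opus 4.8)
The plan is to reduce the problem to a case analysis on the position of the two critical relabeling thresholds $\frac{b_1a_2}{b_2}$ and $\frac{a_1b_2}{a_2}$, and within each case to minimize a piecewise-quadratic function on $[a_1,b_1]$ by examining each quadratic piece and the breakpoints between pieces. First I would observe, as already noted after Equation~\eqref{TV2}, that the total child volume when branching at $c_1$ is exactly $TV(c_1)$ in the case $\frac{b_1a_2}{b_2} \leq \frac{a_1b_2}{a_2}$ and exactly $\widehat{TV}(c_1)$ in the case $\frac{b_1a_2}{b_2} > \frac{a_1b_2}{a_2}$; the three-way split in each definition records precisely which of the children needs a relabeling of $x_1$ against $x_2$ (equivalently, which of $V_1,\dots,V_4$ is the valid application of Theorem~\ref{TheoremPH} to the two subboxes). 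So the theorem becomes: the minimizer of $TV$ (resp.\ $\widehat{TV}$) on $[a_1,b_1]$ is the point produced by Procedure~\ref{fig:flowchart}.

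Next I would analyze each quadratic piece separately. Each $V_j(c_1)$ is a quadratic in $c_1$ with \emph{positive} leading coefficient — this follows because $V$ is a product of two multilinear forms, and branching splits one box into two along $c_1$, so the $c_1^2$ coefficient comes out as a positive multiple of $(b_2-a_2)(b_3-a_3)$ times a positive bracket under \ref{Omega}; hence each piece is a convex parabola and its minimizer over an interval is either its unconstrained vertex (if interior) or an endpoint. A direct computation identifies these vertices: the vertex of $V_1$ is $q_1$, the vertex of $V_2$ (and of $V_4$) is $q_2 = (a_1+b_1)/2$, and the vertex of $V_3$ is $q_3$. Then Lemmas~\ref{c1q1} and~\ref{c2q1} are exactly what is needed to locate $q_1$ relative to the first breakpoint: in the case $\frac{b_1a_2}{b_2}\leq\frac{a_1b_2}{a_2}$ we get $q_1 \geq \frac{b_1a_2}{b_2}$, so the parabola $V_1$ is nonincreasing on the whole first interval $[a_1,\frac{b_1a_2}{b_2}]$ and its minimum there is attained at the right endpoint $\frac{b_1a_2}{b_2}$; symmetrically in the other case $q_1 \geq \frac{a_1b_2}{a_2}$ handles the $\widehat{TV}$ situation. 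This means the global minimizer is never strictly inside the first piece, so it lies in the union of the middle and last pieces.

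Finally I would assemble the pieces using continuity and the monotonicity furnished by Lemmas~\ref{HlessB} and~\ref{HlessB2}. These two lemmas say that, at the two breakpoints, the value coming from the "upper" piece is at least the value coming from the "lower" piece — i.e.\ $V_1$ at $\frac{b_1a_2}{b_2}$ dominates $V_2$ at $\frac{a_1b_2}{a_2}$ (in the first case), and similarly with $V_4$ in the second. Combined with the fact that $V_1$ is already nonincreasing on the first interval, this forces the minimum of the whole function onto the second-and-third pieces, and there one only has to compare: (i) the vertex $q_2$ of the middle parabola, clipped to the middle interval $[\frac{b_1a_2}{b_2},\frac{a_1b_2}{a_2}]$; (ii) the vertex $q_3$ of the last parabola, clipped to $[\frac{a_1b_2}{a_2},b_1]$; and the common breakpoint value where the middle and last parabolas meet. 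Checking that $V_2$ and $V_3$ agree at $\frac{a_1b_2}{a_2}$ (continuity of $TV$), reading off on which side of the breakpoints each vertex $q_2,q_3$ falls, and selecting the smaller, reproduces exactly the branching logic of the flowchart; the mirror-image argument with $V_4$ handles $\widehat{TV}$. Since the two cases $\frac{b_1a_2}{b_2}\lessgtr\frac{a_1b_2}{a_2}$ are exhaustive (and the equality case is covered by either), the procedure is optimal in all cases.

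I expect the main obstacle to be the bookkeeping in the final assembly: verifying that the flowchart's branches correspond one-to-one to the sign conditions "$q_2$ lies left of / inside / right of the middle interval" and "$q_3$ lies left of / right of $\frac{a_1b_2}{a_2}$", and that these sign conditions are consistent (e.g.\ that $q_2$ cannot be to the right of the middle interval while $q_3$ is to the left of it, or that when both vertices are interior their values compare the "right" way). Each such consistency check reduces to an inequality among the bound parameters under \ref{Omega}; most follow from the ordering $\frac{a_1}{b_1}\leq\frac{a_2}{b_2}\leq\frac{a_3}{b_3}$ together with Lemmas~\ref{HlessB}--\ref{c2q1}, but confirming that the lemmas cover every branch — rather than a careful but incomplete subset — is where the argument has to be watched most closely.
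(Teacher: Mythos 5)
Your overall strategy coincides with the paper's: both treat the total child volume as a continuous piecewise-quadratic function with strictly convex pieces, identify the vertices $q_1,q_2,q_3$ of those pieces, rule out $q_1$ as an interior minimizer via Lemmas \ref{c1q1} and \ref{c2q1}, and use Lemmas \ref{HlessB} and \ref{HlessB2} to compare the breakpoint values and push the minimizer into the middle or last piece. However, two steps that the paper carries out explicitly are missing from your sketch. First, your case split on $\frac{b_1a_2}{b_2}\lessgtr\frac{a_1b_2}{a_2}$ is not exhaustive: when $a_2=0$ (which by \ref{Omega} forces $a_1=0$) the breakpoint $\frac{a_1b_2}{a_2}$ is undefined, the left child never needs relabeling while the right child is relabeled for every $c_1>a_1$, and the total volume is the single convex quadratic $V_3(c_1)$ on all of $[a_1,b_1]$ with minimizer $q_3=q_2=b_1/2$. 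The paper isolates this as a separate Case 0, and your argument needs the same carve-out.

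Second, the ``consistency checks'' you defer at the end are not optional bookkeeping: they are precisely what makes the flowchart's three-way branch exhaustive and non-overlapping. The fact actually required is the ordering $q_1\geq q_2\geq q_3$, which the paper establishes by direct computation, namely $q_1-q_2=\tfrac{(b_3-a_3)(b_1a_2-a_1b_2)}{2(4b_2b_3-a_2b_3-3a_2a_3)}\geq 0$ and $q_3-q_2=\tfrac{(a_3-b_3)(b_1a_2-a_1b_2)}{2(3b_2b_3+b_2a_3-4a_2a_3)}\leq 0$, the signs following from $b_i>a_i$ and Lemma \ref{lem91}. This ordering is what guarantees, for instance, that if $q_3\geq\frac{a_1b_2}{a_2}$ then $q_2\geq q_3\geq\frac{a_1b_2}{a_2}$, so $q_2$ lies to the right of the middle interval and $q_3$ is the unique vertex interior to its own piece, whence it is the global minimizer without any further value comparison between $V_2(q_2)$ and $V_3(q_3)$. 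It does not follow from $\frac{a_1}{b_1}\leq\frac{a_2}{b_2}\leq\frac{a_3}{b_3}$ alone without this computation, so it must be stated and proved rather than flagged as a point ``to be watched.'' With Case 0 added and the ordering $q_1\geq q_2\geq q_3$ established, your assembly argument matches the paper's and goes through.
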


\begin{proof}

Given our earlier discussion, it is natural to think about three cases.  First, when $a_2=0$ (we refer to this as Case 0). Second (Case 1), when
$$a_2 \neq 0 \;\;\;\; \text{and}  \;\;\;\;\frac{b_1a_2}{b_2} \leq \frac{a_1b_2}{a_2}\quad \Longleftrightarrow \quad \frac{a_2^2}{b_2^2} \leq \frac{a_1}{b_1}\quad \Longleftrightarrow \quad b_1a_2^2 \leq a_1b_2^2~, $$
and third (Case 2), when
$$a_2 \neq 0 \;\;\;\; \text{and} \;\;\;\; \frac{b_1a_2}{b_2} > \frac{a_1b_2}{a_2} \quad \Longleftrightarrow \quad \frac{a_2^2}{b_2^2} > \frac{a_1}{b_1} \quad \Longleftrightarrow \quad b_1a_2^2 > a_1b_2^2~ .$$
The case of equality, i.e., $\frac{b_1a_2}{b_2} = \frac{a_1b_2}{a_2}$, is arbitrarily included with Case 1.  In fact, when equality holds, the analysis that follows is simplified, and it could be contained in either of the cases.

Depending on the case, the necessary relabeling to ensure $\Omega$ remains satisfied is different, and the functions we defined as $V_i(c_1), \; i=1\dots4$ reflect these different relabelings.  For an illustration of when the variable labeling must change on the left child, the right child, or on both children to ensure that \ref{Omega} remains satisfied (as the branching point varies), see Figure \ref{fig:branch}.



\begin{figure}[h!t]
\begin{centering}
 \subfigure[Case 0]{\includegraphics[width=0.95\textwidth,keepaspectratio]{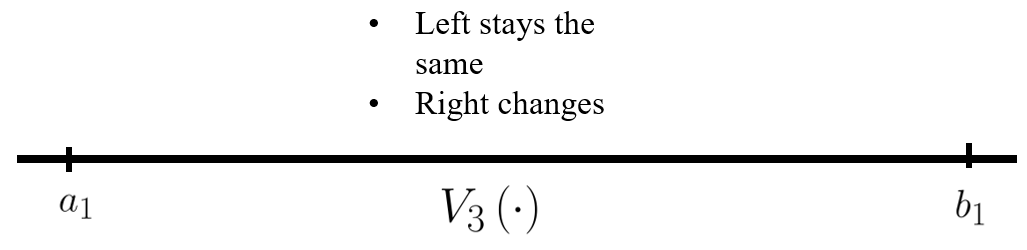}}\\[7mm]
  \subfigure[Case 1]{\includegraphics[width=0.95\textwidth,keepaspectratio]{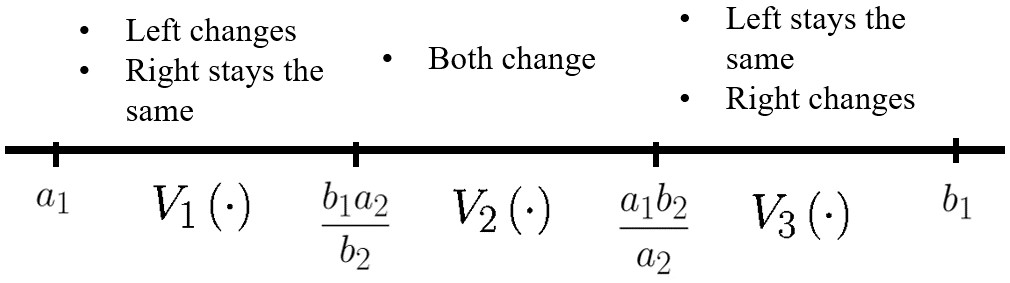}}\\[7mm]
  \subfigure[Case 2]{\includegraphics[width=0.95\textwidth,keepaspectratio]{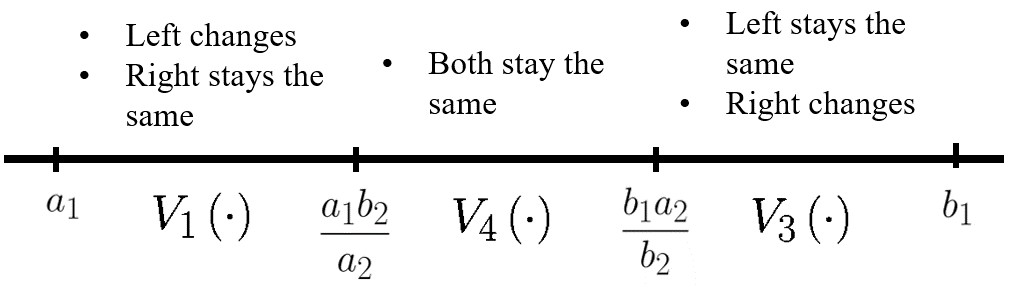}}
    \caption{Variable labeling as the branching point varies for the three cases}
    \label{fig:branch}
    \end{centering}
 \end{figure}

\paragraph{Case 0: $a_2=0$.}
\label{case0}

From the condition \ref{Omega}, we know that $a_2=0 \Rightarrow a_1=0$.  In this special case, the labeling for the left child does not change no matter how small the upper bound becomes.  Conversely, the labeling for the right child changes as soon as the lower bound becomes positive.  We therefore have the picture shown in Figure \ref{fig:branch}, and the function (i.e. $V_3(c_1)$) describing the sum of the volumes of the two child relaxations over the entire domain, $[a_1,b_1]$, does not need to be defined in a piecewise manner.  As we will see shortly, this function is a convex quadratic, and therefore it is easy to check (by calculating where the derivative is zero) that in this special case the minimizer of this function is $q_3$ (defined above) and this is the minimizer of the total volume of the two children.  Furthermore, when $a_2=0$ (and therefore $a_1=0$), this minimizer simplifies to $\frac{b_1}{2}=\frac{a_1+b_1}{2}=q_2$, the midpoint of the interval.

\paragraph{Case 1: $\frac{b_1a_2}{b_2} \leq \frac{a_1b_2}{a_2}$.}
\label{case1}

As illustrated in Figure \ref{fig:branch}, in Case 1, the function describing the sum of the volumes of the child relaxations is $TV(c_1)$ (Equation \ref{TV1}).  It is straightforward to check that the function $TV(c_1)$ is continuous over its domain.  Furthermore, by observing that the leading coefficient of each piece ($V_i(c_1), \; \;i=1,2,3$) is positive for all parameter values satisfying \ref{Omega}, we conclude that each piece is strictly convex.  We are able to claim \emph{strict} convexity because we assume $b_i > a_i$ for all $i$. Using this fact, for each coefficient below we observe that each multiplicand in the numerator is strictly positive and therefore each leading coefficient is strictly positive.

\begin{align*}
&\text{The coefficient of $c_1^2$ in the quadratic function $V_1(c_1)$ is:} & \\
& \hspace*{32.5mm} \frac{(b_3-a_3)(b_2-a_2)(6(b_2b_3-a_2a_3) + 2b_3(b_2-a_2))}{24} > 0. \\
&\text{The coefficient of $c_1^2$ in the quadratic function $V_2(c_1)$ is:}  & \\
& \hspace*{22mm} \frac{(b_3-a_3)(b_2-a_2)(4(b_2b_3-a_2a_3)+2(b_3+a_3)(b_2-a_2))}{24} > 0. \\
&\text{The coefficient of $c_1^2$ in the quadratic function $V_3(c_1)$ is:}& \\
& \hspace*{32.5mm}\frac{(b_3-a_3)(b_2-a_2)(6(b_2b_3-a_2a_3)+2a_3(b_2-a_2))}{24} > 0.
\end{align*}

Figure \ref{fig:piecewisequad} gives some idea of what this function could look like. The example depicts a globally convex function and we are yet to prove that this will always be the case.  However, in later analysis (Theorem \ref{globalconvexity} in \S \ref{sec:globcon}) we will demonstrate that global convexity always holds.

\begin{figure}[h!]
\begin{center}
    \includegraphics[width=0.85\textwidth,keepaspectratio]{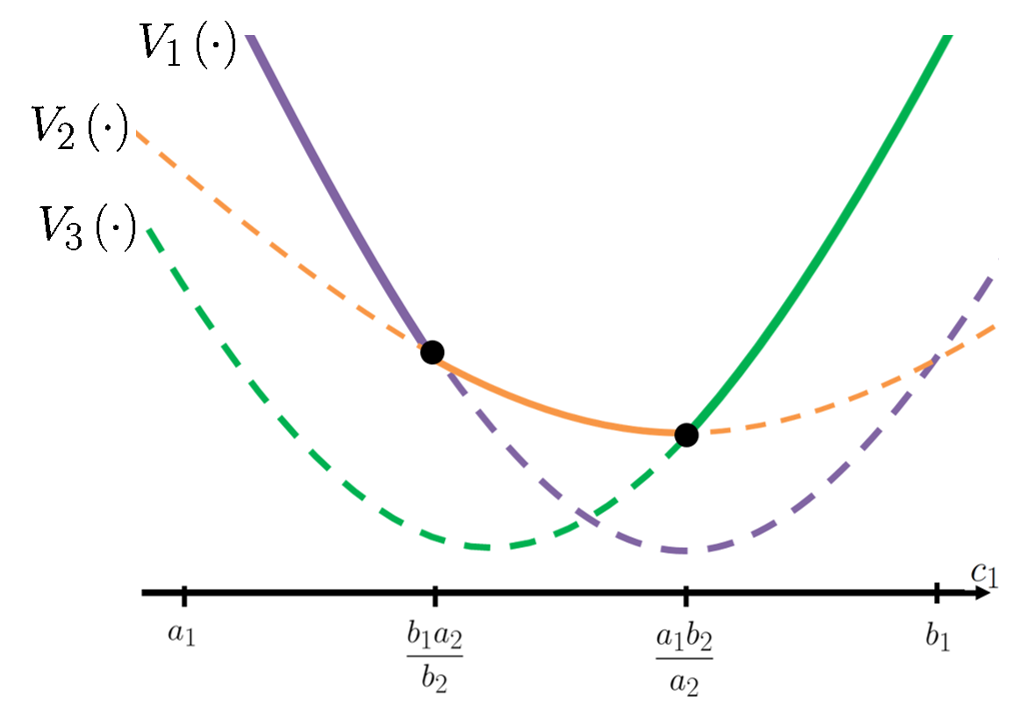}
    \caption{Illustration of a (globally convex) continuous piecewise-quadratic function}
    \label{fig:piecewisequad}
     \end{center}
 \end{figure}

Now that we know that $TV(c_1)$ has this structure, to find the global minimizer over the domain $[a_1,b_1]$, we can simply find the local minimizer on each of the three pieces and pick the point with the least function value.  Because we have convex functions, the local minimum of a given piece will either occur at the global minimizer of $V_i(c_1)$ (if this occurs over the appropriate subdomain), or at one of the end points of the subdomain.  Therefore, to find the local minimizer for a given segment, we first find the global minimizer of $V_i(c_1)$ over the entire real line and check if it occurs in the interval; if so, it is the local minimizer, if not, we examine the interval end points to locate the local minimizer.  We can then compare the function value of the local minimizer of each of the three pieces to find the global minimizer of $TV(c_1)$, i.e., the branching point that obtains the least total volume.

Given that each $V_i$ is a parameterised convex-quadratic function in $c_1$, it is easy to use a computer algebra system to calculate the following:\\
The minimum of $V_1(c_1)$ occurs at:
\begin{equation*}
c_1=\frac{3a_1a_2a_3+a_1a_2b_3-a_1b_2a_3-3a_1b_2b_3+4b_1a_2a_3-4b_1b_2b_3}{2(3a_2a_3+a_2b_3-4b_2b_3)}=q_1.
\end{equation*}
The minimum of $V_2(c_1)$ occurs at:
\begin{equation*}
c_1=\frac{a_1+b_1}{2}=q_2.
\end{equation*}
The minimum of $V_3(c_1)$ occurs at:
\begin{equation*}
c_1=\frac{4a_1a_2a_3-4a_1b_2b_3+3b_1a_2a_3+b_1a_2b_3-b_1b_2a_3-3b_1b_2b_3}{2(4a_2a_3-b_2a_3-3b_2b_3)}=q_3.
\end{equation*}

Therefore, the candidate points for the minimizer are $a_1$, $\frac{b_1a_2}{b_2}$, $\frac{a_1b_2}{a_2}$, $b_1$, $q_1$, $q_2$ and $q_3$.  We can immediately discard $a_1$ and $b_1$ because these are both equivalent to not branching. By branching and reconvexifying over the two children, we can never do worse with regard to volume.  Therefore, we have five points to consider.  For a given set of parameters, it is straightforward to evaluate and check which of these five points is the minimizer.  However, making use of the following observations, we can further reduce the possibilities.

If $q_1$ were to be the global minimizer, then it must fall in the appropriate subdomain; i.e., it must be that $q_1 \leq \frac{b_1a_2}{b_2}$.  However, by Lemma \ref{c1q1}, in Case 1 we always have $q_1 \geq \frac{b_1a_2}{b_2}$. Therefore, we can discard $q_1$ as a candidate point for the minimizer because for it to be the minimizer, this quantity would have to be exactly equal to $\frac{b_1a_2}{b_2}$, which is already on the list of candidate points.

Now, consider the quantities: \begin{equation}\label{q1more} q_1 - q_2 = \frac{(b_3-a_3)(b_1a_2-a_1b_2)}{2(4b_2b_3-a_2b_3-3a_2a_3)}\geq 0,\end{equation} and \begin{equation}\label{q3less}  q_3 - q_2 = \frac{(a_3-b_3)(b_1a_2-a_1b_2)}{2(3b_2b_3+b_2a_3-4a_2a_3)}\leq 0.\end{equation}

The inequalities follow from $b_i > a_i$, $i=1,2,3$, and Lemma \ref{lem91} in the Appendix.  We therefore have: \begin{equation}\label{minordering}q_1 \geq q_2 = \frac{a_1+b_1}{2} \geq q_3.\end{equation}

From this, we can observe that if $q_3 \geq \frac{a_1b_2}{a_2}$, then $q_2 \geq q_3 \geq \frac{a_1b_2}{a_2}$, and therefore $q_3$ is the minimizer.  This is because neither $q_1$ nor $q_2$ fall in their key intervals (i.e. in the appropriate subdomain); furthermore, by the definition of $q_3$ as the minimizer of $V_3$, we must have that $V_3(q_3) \leq V_3\left(\frac{a_1b_2}{a_2}\right)$, and by Lemma \ref{HlessB}, we have that $V_3\left(\frac{a_1b_2}{a_2}\right) \leq V_2\left(\frac{b_1a_2}{b_2}\right)$.

If this does not occur, i.e. $q_3 < \frac{a_1b_2}{a_2}$, then if $\frac{b_1a_2}{b_2} \leq \frac{a_1+b_1}{2} \leq \frac{a_1b_2}{a_2}$, the midpoint $q_2$ is the minimizer.  This is because under these conditions, $q_2$ is the only minimizer that occurs in the `correct' function piece, and by definition of $q_2$ as the minimizer of $V_2$, the function value is not more than at either of the end points.

Otherwise, if none of the above occurs (i.e., none of the intervals contain their function global minimizer), we have that $\frac{a_1b_2}{a_2}$ is the minimizer by Lemma \ref{HlessB}.

\paragraph{Case 2: $\frac{b_1a_2}{b_2} > \frac{a_1b_2}{a_2}$.}
\label{case2}

In this second case, for a given problem with initial upper and lower bounds ($a_1$, $b_1$, $a_2$, $b_2$, $a_3$, $b_3$), the sum of the volumes of the two child relaxations after branching at point $c_1$, is given by the function $\widehat{TV}(c_1)$ (Equation \ref{TV2} and illustrated in Figure \ref{fig:branch}).  This is similar, but distinct, from the function in Case 1.

Recall that this is a piecewise-quadratic function in $c_1$, and, as before, it is simple to check that the function is continuous over its domain.  Furthermore, by observing that the leading coefficient of each piece is positive for all parameter values satisfying \ref{Omega}, we know that each piece is strictly convex.  Strict convexity comes from the knowledge $b_i>a_i, \; i=1,2,3$.

\begin{align*}&\text{The coefficient of $c_1^2$ in the quadratic function $V_4(c_1)$ is:} \\
& \hspace*{64mm} \frac{8(b_3-a_3)(b_2-a_2)(b_2b_3-a_2a_3)}{24} > 0.\end{align*}

Therefore, we can take the same approach as before to find the global minimizer: first find the local minimizer for each segment.  We do this by finding the global minimizer for the appropriate function ($V_i(c_1)$), over the whole real line and checking if it occurs in the segment.  If it does, we have found the minimizer for that segment, if not, we examine the interval end points.  We then compare the minimum in each of the three segments to find the branching point that obtains the least total volume.

From our analysis of Case 1, we know that the minimums of $V_1(c_1)$ and $V_3(c_1)$ occur at $q_1$ and $q_3$ respectively.  We compute that the minimum of $V_4(c_1)$ occurs at the midpoint of the whole interval, i.e., at
$$
c_1=\frac{a_1+b_1}{2} = q_2.
$$

As before, the candidate points for the minimizer are $\frac{b_1a_2}{b_2}$, $\frac{a_1b_2}{a_2}$, $q_1$, $q_2$ and $q_3$.   However, by making the following observations we can further reduce the points we need to examine.

If $q_1$ were to be the global minimizer, then it must fall in the appropriate subdomain, i.e., it must be that $q_1 \leq \frac{a_1b_2}{a_2}$.  However, by Lemma \ref{c2q1}, in Case 2 we always have $q_1 \geq \frac{a_1b_2}{a_2}$. Therefore, we can discard $q_1$ as a candidate point for the minimizer because for it to be the minimizer it would have to be exactly equal to $\frac{a_1b_2}{a_2}$, which is already on the list of candidate points.

If $q_3 \geq \frac{b_1a_2}{b_2}$, then $q_2 \geq q_3 \geq \frac{b_1a_2}{b_2}$, and therefore $q_3$ is the minimizer.  This is because neither $q_1$ nor $q_2$ fall in their key intervals; furthermore, by definition of $q_3$ as the minimizer of $V_3$, we must have that $V_3(q_3) \leq V_3\left(\frac{b_1a_2}{b_2}\right)$, and by Lemma \ref{HlessB2} we know that $V_3\left(\frac{b_1a_2}{b_2}\right) \leq V_1\left(\frac{a_1b_2}{a_2}\right)$.

If this does not occur, i.e. $q_3 < \frac{b_1a_2}{b_2}$, then if $\frac{a_1b_2}{a_2} \leq \frac{a_1+b_1}{2} \leq \frac{b_1a_2}{b_2}$, the midpoint $q_2$ is the minimizer.  This is because under these conditions, $q_2$ is the only minimizer that occurs in the `correct' function piece, and by definition of $q_2$ as the minimizer of $V_4$, the function value is no more than at either of the end points.

Otherwise, we have that $\frac{b_1a_2}{b_2}$ is the minimizer by Lemma \ref{HlessB2}.  Therefore Procedure \ref{x1flowchart} is correct.
\qed
\end{proof}

\subsection{Side note}

For completeness, and as an interesting side point, we  note that in Case 1, if it were possible to have $q_1 \leq \frac{b_1a_2}{b_2}$, then $q_3 \leq q_2 \leq q_1 \leq \frac{b_1a_2}{b_2}$, and therefore $q_1$ would be the minimizer.  This is because neither $q_2$ nor $q_3$ would fall in their key intervals; furthermore, by the definition of $q_1$ as the minimizer of $V_1$, we have that $V_1(q_1) \leq V_1\left(\frac{b_1a_2}{b_2}\right)$, and by Proposition \ref{lemAlessH}(see the Appendix), we know that $V_1(q_1) \leq V_2\left(\frac{a_1b_2}{a_2}\right)$.  However, by Lemma \ref{c1q1} we have already discarded this case.

As another interesting side point, we also note that in Case 2, if it were possible to have $q_1 \leq \frac{a_1b_2}{a_2}$, then $q_3 \leq q_2 \leq q_1 \leq \frac{a_1b_2}{a_2}$, and $q_1$ would be the minimizer.  This is because neither $q_2$ nor $q_3$ would fall in their key intervals.  Furthermore, by definition of $q_1$ as the minimizer of $V_1$, we must have that $V_1(q_1) \leq V_1\left(\frac{a_1b_2}{a_2}\right)$, and by Proposition \ref{lemAlessH2} (see the Appendix), we know that $V_1(q_1) \leq V_4\left(\frac{b_1a_2}{b_2}\right)$.  However, by Lemma \ref{c2q1} we have already discarded this case.

\subsection{Some examples}

We can illustrate these piecewise-quadratic functions for the possible outcomes of Procedure 1. In this illustration, we focus on Case 1, and therefore Figure \ref{fig:eg} shows the function $TV(c_1)$ over the domain $[a_1,b_1]$.  The (orange) dashed curve illustrates an example where the minimizer of $V_3(c_1)$, (i.e. $q_3$), falls in the relevant interval, and therefore is the minimizer over our whole domain.  The (purple) solid curve illustrates an example where $q_3$ does not fall in this interval, however the midpoint, $q_2$, falls in between the quantities $\frac{b_1a_2}{b_2}$ and $\frac{a_1b_2}{a_2}$ and is therefore the required minimizer.  The (green) dotted curve illustrates an example where neither of the above happens, and therefore the breakpoint between the function $V_2(c_1)$ and the function $V_3(c_1)$ is the minimizer.  In this example we are in Case 1, and therefore this point is $\frac{a_1b_2}{a_2}$.

\begin{figure}[h!b]
\begin{center}
\includegraphics[width=0.9\textwidth,keepaspectratio]{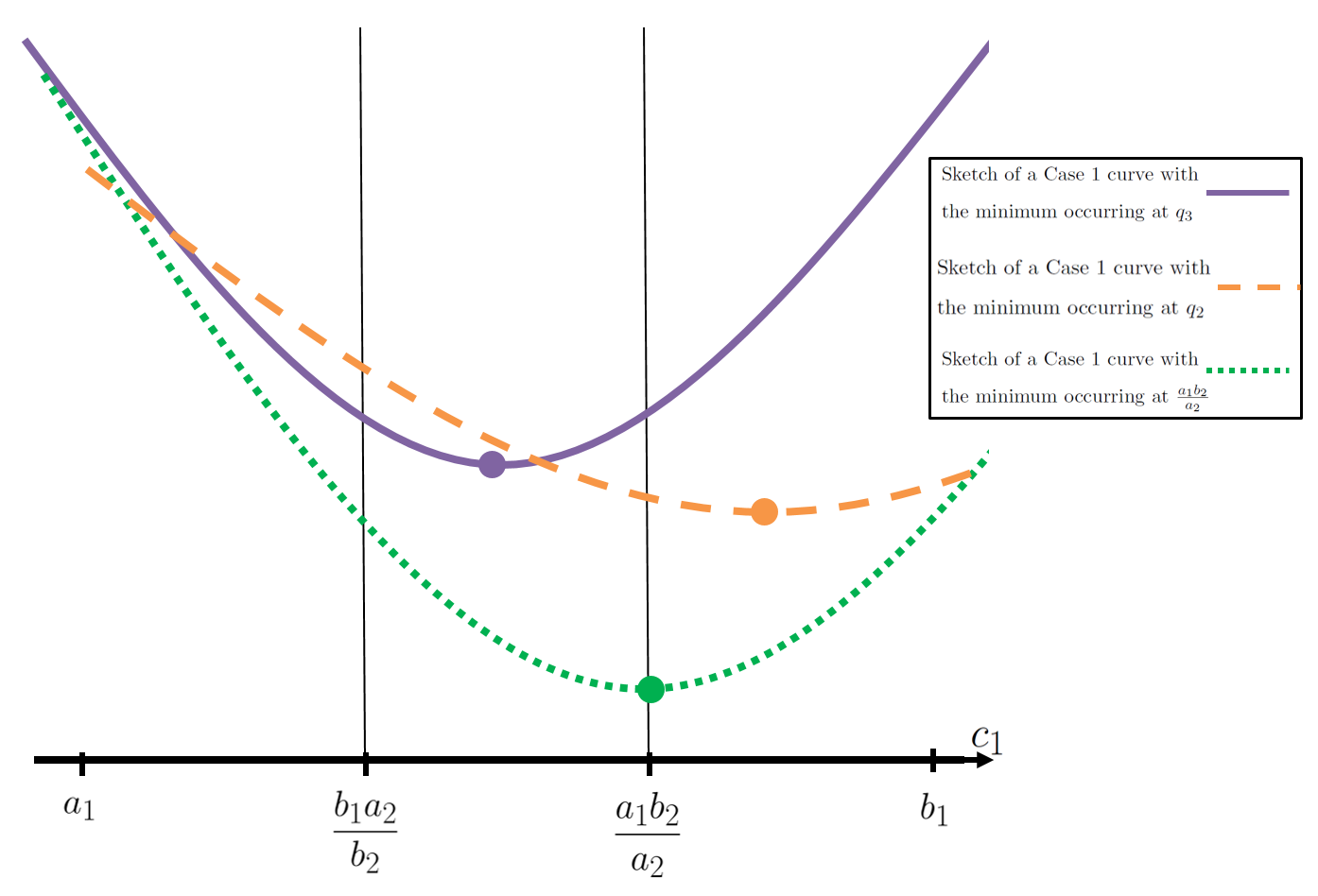}
    \caption{Picture to illustrate the possible outcomes of Procedure 1 in Case 1}
     \label{fig:eg}
     \end{center}
 \end{figure}


It is important to note that each of the cases in Procedure 1 actually \emph{can} occur.  It is easy to check the following:
\begin{itemize}
 \item An example of a dashed curve (minimum occurs at $q_3$) is ($a_1=1$, $b_1=35$, $a_2=2$,  $b_2=12$, $a_3=12$, $b_3=35$).
 \item An example of a solid curve (minimum occurs at $q_2$) is ($a_1=1$, $b_1=34$, $a_2=2$, $b_2=36$, $a_3=12$, $b_3=35$).
 \item An example of a dotted curve (minimum occurs at $\frac{a_1b_2}{a_2}$) is ($a_1=1$, $b_1=8$, $a_2=5$, $b_2=22$, $a_3=1$, $b_3=4$).
\end{itemize}
 Unfortunately, the plots of the actual functions do not display the key details as clearly as our illustration, so we do not include them here.

Furthermore, an example of Case 2, where the minimum occurs at the breakpoint between the function $V_4$ and the function $V_3$, i.e. the point $\frac{b_1a_2}{b_2}$ is ($a_1=1$, $b_1=13$, $a_2=1$, $b_2=2$, $a_3=2$, $b_3=4$).  Finally, a simple example of Case 0, is the special case ($a_1=0$, $b_1=1$, $a_2=0$, $b_2=1$, $a_3=0$, $b_3=1$).  In Figure \ref{fig:01eg} we can see the plot of this function and the minimum, which falls at the midpoint.  In Case 0 we always have $q_1=q_2=q_3=\frac{a_1+b_1}{2}=\frac{b_1}{2}$.


 \begin{figure}[h!]
 \begin{center}
  \includegraphics[width=0.7\textwidth,keepaspectratio]{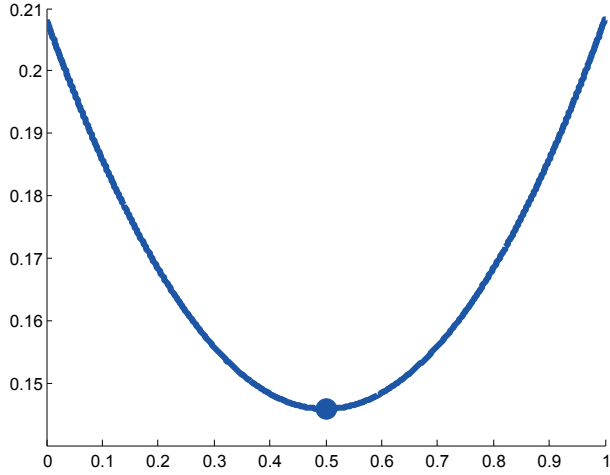}
    \caption{Plot of the total volume function (when branching on $x_1$), for parameter values: $(a_1=0,b_1=1,a_2=0,b_2=1,a_3=0,b_3=1)$}
    \label{fig:01eg}
    \end{center}
 \end{figure}

\subsection{Global convexity of our piecewise-quadratic function over its domain}
\label{sec:globcon}
We have seen that each piece of $TV(c_1)$ and $ \widehat{TV}(c_1)$ is a convex quadratic function.  However, this does not imply that the functions are convex over the whole domain, $[a_1,b_1]$.  Nevertheless, as we show in the following theorem, with a bit more work, we are able to demonstrate that $TV(c_1)$ and $ \widehat{TV}(c_1)$ \emph{are} convex over the domain, $[a_1,b_1]$.  It is very useful that these functions are globally convex; if a variable appears in many trilinear terms,
it is quite reasonable to combine volumes in a reasonable manner
(see \cite{SpeakmanYuLee2016}).
For example, we can take a weighted average (of the sum of the two volumes for each term) as a measure for deciding on a branching point.  A weighted average (assuming positive weights) of convex functions is convex, and therefore, the global-convexity property of these functions allows us to find the optimal branching point (defined as the minimum of the weighted-average function) by a simple bisection search.  However, it is important to note that we are not advocating a bisection search if there is only one term being considered.  In this case, Procedure (\ref{fig:flowchart}) is more efficient.


\begin{theorem}
\label{globalconvexity}
Given that the upper- and lower-bound parameters respect the labeling \ref{Omega}, the functions $TV(c_1)$ and $ \widehat{TV}(c_1)$ are globally-convex functions in the branching point $c_1$ over the domain $[a_1,b_1]$.
\end{theorem}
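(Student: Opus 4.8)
The plan is to reduce global convexity to a one-sided derivative condition at the (at most two) breakpoints of each function. From the proof of Theorem~\ref{x1flowchart} we already know that $TV(c_1)$ and $\widehat{TV}(c_1)$ are \emph{continuous} on $[a_1,b_1]$ and that each of their pieces ($V_1,V_2,V_3$ for $TV$; $V_1,V_4,V_3$ for $\widehat{TV}$) is a \emph{strictly convex} quadratic. For a continuous function that is convex on each interval of a finite partition of its domain, global convexity is equivalent to the statement that the right-hand derivative is non-decreasing across the whole domain, i.e. that at each interior breakpoint $t$ the left derivative does not exceed the right derivative, $f'_-(t)\le f'_+(t)$. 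Since within each piece the derivative is already non-decreasing (each piece being convex), the theorem reduces to verifying this single inequality at each breakpoint.

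I would first dispose of Case~0 ($a_2=0$, hence $a_1=0$ by \ref{Omega}): there both functions equal the single quadratic $V_3(c_1)$ on all of $[a_1,b_1]$, which is convex, so nothing is required. For the remaining cases take $a_2>0$, so that the breakpoints $t_1:=\frac{b_1a_2}{b_2}$ and $t_2:=\frac{a_1b_2}{a_2}$ are well defined; moreover \ref{Omega} gives $\frac{a_1}{b_1}\le\frac{a_2}{b_2}$, which forces $a_1\le t_1$ and $t_2\le b_1$, so the breakpoints lie in the domain and the checks below are not vacuous.

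For $TV$ in Case~1 (so $t_1\le t_2$), the ordered pieces are $V_1$ on $[a_1,t_1]$, $V_2$ on $[t_1,t_2]$, $V_3$ on $[t_2,b_1]$. Each $V_i'(c_1)$ is affine in $c_1$ with coefficients that are signed sums of products of the bound parameters, so it is routine (with a computer algebra system) to form the two jumps $\Delta_1:=V_2'(t_1)-V_1'(t_1)$ and $\Delta_2:=V_3'(t_2)-V_2'(t_2)$, substitute the rational values of $t_1,t_2$, and clear denominators. I expect each to simplify to a product of a manifestly nonnegative quantity (differences $b_i-a_i$, sums of products of nonnegative bounds, the positive denominators occurring in the $V_i$) with the recurring factor $b_1a_2-a_1b_2$, which is $\ge 0$ under \ref{Omega} (as in Equations~\eqref{q1more}--\eqref{q3less}); hence $\Delta_1,\Delta_2\ge 0$ and, by the criterion above, $TV$ is convex on $[a_1,b_1]$. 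For $\widehat{TV}$ in Case~2 (so $t_2<t_1$) the argument has the same shape: the ordered pieces are $V_1$, $V_4$, $V_3$, the two breakpoints are $t_2$ then $t_1$, the relevant jumps are $V_4'(t_2)-V_1'(t_2)$ and $V_3'(t_1)-V_4'(t_1)$, and nonnegativity now uses \ref{Omega} together with, where needed, the Case~2 hypothesis $b_1a_2^2>a_1b_2^2$ to fix the sign of the one not-obviously-signed factor.

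The main obstacle is computational rather than conceptual: the two jump expressions for each function are bulky, and the real work is (i) keeping straight which relabelled volume formula $V_i$ governs each side of each breakpoint — exactly the bookkeeping illustrated in Figure~\ref{fig:branch} — and (ii) massaging each simplified jump into a transparently nonnegative factored form using only \ref{Omega}, the inequalities $b_i>a_i$, and the case hypothesis. A useful consistency check throughout is that continuity already forces $V_1(t_1)=V_2(t_1)$ and $V_2(t_2)=V_3(t_2)$ (the equalities in Lemma~\ref{HlessB}), with the analogue for $\widehat{TV}$ supplied by Lemma~\ref{HlessB2}; these identities constrain the algebra and catch any sign or labeling slip.
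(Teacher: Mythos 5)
Your proposal is correct, but it takes a genuinely different route from the paper's. The paper proves global convexity by showing that $TV(c_1)$ is the \emph{pointwise maximum} of the three convex quadratics $V_1,V_2,V_3$ over all of $[a_1,b_1]$ (and $\widehat{TV}$ of $V_1,V_4,V_3$): it computes the pairwise differences, e.g.
\[
V_1(c_1)-V_2(c_1)=\tfrac{1}{12}(b_3-a_3)^2(b_2-a_2)(b_1-c_1)(b_1a_2-c_1b_2),
\]
reads off which piece dominates on each subinterval, and invokes convexity of a maximum of convex functions. You instead reduce to the one-sided-derivative jump condition at the two breakpoints, which is an equally valid criterion given continuity and piecewise convexity. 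The two arguments rest on the same algebra: each factored difference above has a simple root exactly at the relevant breakpoint, so your derivative jumps collapse to a single term. Concretely, at $t_1=b_1a_2/b_2$ one gets $V_2'(t_1)-V_1'(t_1)=\tfrac{1}{12}(b_3-a_3)^2(b_2-a_2)^2\,b_1\ge 0$, at $t_2=a_1b_2/a_2$ one gets $V_3'(t_2)-V_2'(t_2)=\tfrac{1}{12}(b_3-a_3)^2(b_2-a_2)^2\,a_1\ge 0$, and the identical two quantities arise for $\widehat{TV}$. So the computation is even cleaner than you anticipated: the factor $b_1a_2-a_1b_2$ and the Case~2 hypothesis never enter the sign of the jumps --- they are needed only to order the breakpoints, exactly as in your bookkeeping step (and note that your ordering of the $\widehat{TV}$ pieces, with breakpoints $t_2$ then $t_1$, is the one consistent with Lemma~\ref{HlessB2} and Figure~\ref{fig:branch}). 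What the paper's route buys is a slightly stronger structural fact, namely an explicit representation of $TV$ and $\widehat{TV}$ as maxima of three globally convex quadratics; what yours buys is locality --- only two scalar inequalities to verify per function --- and it extends mechanically to any continuous piecewise-convex objective of this type. Your separate treatment of Case~0 and the implicit handling of the degenerate boundary $t_1=t_2$ are both fine.
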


\begin{proof}
To demonstrate the global convexity of $TV(c_1)$, we will establish that it is the pointwise maximum of the convex functions $V_1(c_1)$, $V_2(c_1)$ and $V_3(c_1)$.  Similarly, to demonstrate the global convexity of $\widehat{TV}(c_1)$, we will establish that it is the pointwise maximum of the convex functions $V_1(c_1)$, $V_4(c_1)$ and $V_3(c_1)$.


\paragraph{Global convexity of $\bm{TV(c_1)}$:}

Consider the difference of $V_1(c_1)$ and $V_2(c_1)$:

$$V_1(c_1) - V_2(c_1) = \frac{(b_3-a_3)^2(b_1-c_1)(b_2-a_2)(b_1a_2-c_1b_2)}{12}.$$

Note that for all parameter values such that \ref{Omega} is satisfied and $a_1 < c_1 < b_1$, we have that $V_1(c_1) > V_2(c_1)$ if and only if $c_1 < \frac{b_1a_2}{b_2}$ and conversely $V_1(c_1) > V_2(c_1)$ if and only if $c_1 > \frac{b_1a_2}{b_2}$.  They are equal when $c_1 = \frac{b_1a_2}{b_2}$.

Now consider the difference of $V_3(c_1)$ and $V_2(c_1)$:

$$V_3(c_1) - V_2(c_1) = \frac{(b_3-a_3)^2(c_1-a_1)(b_2-a_2)(c_1a_2-a_1b_2)}{12}.$$

Again, note that for all parameter values such that \ref{Omega} is satisfied and $a_1 < c_1 < b_1$, we have that $V_3(c_1) > V_2(c_1)$ if and only if $c_1 > \frac{a_1b_2}{a_2}$ and conversely $V_3(c_1) < V_2(c_1)$ if and only if $c_1 < \frac{a_1b_2}{a_2}$.  They are equal when $c_1 = \frac{a_1b_2}{a_2}$.  Also recall that in the definition of $TV(c_1)$, we implicitly assume $\frac{b_1a_2}{b_2} \geq \frac{a_1b_2}{a_2}$.  We can make the following observations.

On the interval $c_1 \in \left(a_1,\frac{b_1a_2}{b_2}\right)$ we have $V_1(c_1)>V_2(c_1)>V_3(c_1)$, at $c_1=\frac{b_1a_2}{b_2}$ we have $V_1(c_1)=V_2(c_1)>V_3(c_1)$, on the interval $c_1 \in \left(\frac{b_1a_2}{b_2}, \frac{a_1b_2}{a_2}\right)$ we have $V_2(c_1)>V_1(c_1)$ and $V_2(c_1)>V_3(c_1)$. At $c_1=\frac{a_1b_2}{a_2}$ we have $V_3(c_1)=V_2(c_1)>V_1(c_1)$ and on the interval $c_1 \in \left(\frac{a_1b_2}{a_2}, b_1\right)$ we have $V_3(c_1)>V_2(c_1)>V_1(c_1)$.  Furthermore, when $c_1=a_1$, we have $V_1(c_1)>V_2(c_1)=V_3(c_1)$ and when $c_1=b_1$ we have $V_3(c_1)>V_2(c_1)=V_1(c_1)$.

From these observations, it is clear that $TV(c_1)$, is the pointwise maximum of the convex functions $V_1(c_1)$, $V_2(c_1)$ and $V_3(c_1)$ over the domain $[a_1,b_1]$ therefore we observe that $TV(c_1)$ is globally convex over the domain $[a_1,b_1]$.

\paragraph{Global convexity of $\bm{\widehat{TV}(c_1)}$:}

Consider the difference of $V_1(c_1)$ and $V_4(c_1)$:

$$V_1(c_1) - V_4(c_1) = \frac{(b_3-a_3)^2(c_1-a_1)(b_2-a_2)(a_1b_2-c_1a_2)}{12}.$$

Note that for all parameter values such that \ref{Omega} is satisfied and $a_1 < c_1 < b_1$, we have that $V_1(c_1) > V_4(c_1)$ if and only if $c_1 < \frac{a_1b_2}{a_2}$ and conversely $V_1(c_1) > V_2(c_1)$ if and only if $c_1 > \frac{a_1b_2}{a_2}$.  They are equal when $c_1 = \frac{a_1b_2}{a_2}$.

Now consider the difference of $V_3(c_1)$ and $V_4(c_1)$:

$$V_3(c_1) - V_4(c_1) = \frac{(b_3-a_3)^2(b_1-c_1)(b_2-a_2)(c_1b_2-b_1a_2)}{12}.$$

Again, note that for all parameter values such that \ref{Omega} is satisfied and $a_1 < c_1 < b_1$, we have that $V_3(c_1) > V_4(c_1)$ if and only if $c_1 > \frac{b_1a_2}{b_2}$ and conversely $V_3(c_1) < V_2(c_1)$ if and only if $c_1 < \frac{b_1a_2}{b_2}$.  They are equal when $c_1 = \frac{b_1a_2}{b_2}$.

Now recall that $\widehat{TV}(c_1)$ is defined with the assumption $\frac{b_1a_2}{b_2} < \frac{a_1b_2}{a_2}$.  We can make almost identical observations to those above to see that $\widehat{TV}(c_1)$  is the pointwise maximum of the convex functions $V_1(c_1)$, $V_4(c_1)$ and $V_3(c_1)$ and therefore is also globally convex over the domain $[a_1,b_1]$.
\qed \end{proof}

\subsection{Bounds on where the optimal branching point can occur}

We have seen in \S \ref{intro} that software employ methods to avoid selecting a branching point that falls too close to either endpoint of the interval.  Therefore, a natural issue to consider is whether our minimizer can fall close to either of the endpoints.  We want to know how likely it is that solvers are routinely precluding our optimal branching point.  The following theorems give some insight on this issue and show that, in fact, software is unlikely to be cutting off our optimal branching point.

 \begin{theorem}
 \label{upperbound}
 The branching point for variable $x_1$ that obtains the least total volume, never occurs at a point in the interval greater than the midpoint.
 \end{theorem}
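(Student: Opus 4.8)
The plan is to read off the finite list of possible optimizers from Theorem~\ref{x1flowchart} (equivalently, from the terminal branches of Procedure~\ref{fig:flowchart}) and to check that every one of them lies at or below the midpoint $q_2=(a_1+b_1)/2$. From the case analysis proving Theorem~\ref{x1flowchart}, the optimal branching point is always one of: $q_2$; $q_3$; the breakpoint $\frac{a_1b_2}{a_2}$ (which can be optimal only in Case~1); or the breakpoint $\frac{b_1a_2}{b_2}$ (which can be optimal only in Case~2); and in Case~0 it is simply $q_2$. For the first two candidates there is nothing to do: $q_2$ \emph{is} the midpoint, and $q_3\le q_2$ by the ordering (\ref{minordering}). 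So the whole theorem reduces to controlling the two breakpoint outcomes.

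The one estimate that does the work is that the midpoint always sits on the ``$V_2$-side'' (resp.\ ``$V_4$-side'') of the relevant breakpoint; concretely, I would prove
\[
q_2 \;\geq\; \frac{b_1a_2}{b_2}\ \text{ in Case 1,}\qquad q_2 \;\geq\; \frac{a_1b_2}{a_2}\ \text{ in Case 2.}
\]
Each of these is a one-line computation. In Case~1, $q_2\geq\frac{b_1a_2}{b_2}$ is equivalent to $a_1b_2\geq b_1(2a_2-b_2)$; this is immediate when $2a_2\leq b_2$, and when $2a_2>b_2$ the Case~1 hypothesis $b_1a_2^2\leq a_1b_2^2$ gives $b_1(2a_2-b_2)\leq \frac{a_1b_2^2(2a_2-b_2)}{a_2^2}\leq a_1b_2$, the final inequality being exactly $(a_2-b_2)^2\geq 0$. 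Case~2 is the mirror image, using $b_1a_2^2> a_1b_2^2$ and $(b_2-a_2)^2\geq 0$. (The same two bounds can instead be deduced from Lemma~\ref{HlessB} and Lemma~\ref{HlessB2} together with the strict convexity of $V_2$ and $V_4$: if $q_2$ — which is the minimizer of each of those quadratics — were strictly below the smaller of the two breakpoints, the quadratic would be strictly increasing from that breakpoint onward, contradicting the inequality between its two breakpoint values asserted by the lemma.)

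Granting this, the breakpoint cases finish at once. In Case~1 the procedure returns $\frac{a_1b_2}{a_2}$ only when $q_2$ fails to lie in the open subinterval $\left(\frac{b_1a_2}{b_2},\frac{a_1b_2}{a_2}\right)$ (otherwise it returns $q_2$); together with $q_2\geq\frac{b_1a_2}{b_2}$ this forces $q_2\geq\frac{a_1b_2}{a_2}$, so the output does not exceed the midpoint. Case~2 is identical with the two breakpoints interchanged, using $q_2\geq\frac{a_1b_2}{a_2}$. In Case~0 the output is $q_2$ itself. This exhausts all cases.

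I expect the real care to be bookkeeping rather than mathematics: one must match each terminal branch of Procedure~\ref{fig:flowchart} to the interval condition that produces it, and one must check the degenerate sub-case $\frac{b_1a_2}{b_2}=\frac{a_1b_2}{a_2}$ (where the middle piece collapses) — but the bound $q_2\geq\frac{b_1a_2}{b_2}$ still holds there by the same algebra, so nothing extra is needed. One could alternatively bypass the procedure and argue straight from global convexity (Theorem~\ref{globalconvexity}): since $TV$ and $\widehat{TV}$ are convex on $[a_1,b_1]$, it suffices that their right derivative at $q_2$ be nonnegative, and the bound above places $q_2$ either in the middle piece — where $V_2$ (resp.\ $V_4$) is minimized exactly at $q_2$ — or in the last piece — where $V_3$ is minimized at $q_3\leq q_2$ — so that derivative is nonnegative in every case.
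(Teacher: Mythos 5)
Your proposal is correct and follows essentially the same route as the paper: enumerate the candidate minimizers from Theorem~\ref{x1flowchart}, dispose of $q_2$ and $q_3$ via (\ref{minordering}), and reduce the breakpoint cases to the single inequality $q_2\geq\min\left\{\frac{b_1a_2}{b_2},\frac{a_1b_2}{a_2}\right\}$, which is precisely the fact the paper establishes by showing the midpoint cannot lie strictly below both breakpoints. The only difference is cosmetic: you verify that inequality by direct algebra from the case hypothesis and $(a_2-b_2)^2\geq 0$, whereas the paper argues by contradiction via $Y>b_1X$ and $Y<a_1X$; your parenthetical global-convexity alternative is also sound but is not the argument the paper uses.
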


\begin{proof}
If $a_2=0$, then we are in Case 0, and the minimizer is at the midpoint, which is clearly no greater than the midpoint.\\

If $\frac{a_1b_2}{a_2}\geq\frac{b_1a_2}{b_2}$, then we are in Case 1.  If $q_3 \geq \frac{a_1b_2}{a_2}$, then $q_3$ is the minimizer, but we know that $q_3 \leq \frac{a_1+b_1}{2}$ (see Equation \ref{q3less}).  If $q_2 = \frac{a_1+b_1}{2}$ falls in the interval $\left[\frac{b_1a_2}{b_2},\frac{a_1b_2}{a_2}\right]$, then the midpoint is the minimizer.  If it does not, then (i) $\frac{a_1b_2}{a_2}$ is the minimizer, and (ii) it must be that either that $\frac{a_1+b_1}{2} > \frac{a_1b_2}{a_2}$, in which case our claim is valid, or  $\frac{a_1+b_1}{2} < \frac{b_1a_2}{b_2} \leq \frac{a_1b_2}{a_2}$.  We will show by contradiction that this cannot be the case.

Toward this end, assume that:
\[
\frac{a_1+b_1}{2} < \frac{b_1a_2}{b_2}\quad \text{and} \quad \frac{a_1+b_1}{2} < \frac{a_1b_2}{a_2}.
\]
This implies:
\begin{align*}&2b_1a_2 -b_1b_2 - a_1b_2 = b_1(a_2-b_2) + (b_1a_2-a_1b_2) > 0,  \quad \text{and}\\
&2a_1b_2-a_1a_2-b_1a_2 = a_1(b_2-a_2) +(a_1b_2-b_1a_2) > 0.
\end{align*}
Now let $X\colonequals b_2-a_2$ and $Y\colonequals b_1a_2-a_1b_2$ (note that both $X$ and $Y$ are non-negative: Lemma \ref{lem91}).  Therefore we can write our assumption as:
  $$b_1(-X) + Y > 0   \quad \text{and} \quad a_1(X) + (-Y) > 0,$$
which implies $$Y > b_1X \quad \text{and} \quad Y < a_1X,$$ a contradiction.  Therefore, in Case 1 the minimizer must be no larger than the midpoint.

We make a similar argument for Case 2.  Here  $\frac{a_1b_2}{a_2}<\frac{b_1a_2}{b_2}$.  If $q_3 \geq \frac{b_1a_2}{b_2}$, then $q_3$ is the minimizer, but we know that $q_3 \leq \frac{a_1+b_1}{2}$ (see Equation \ref{q3less}).  If $q_2 = \frac{a_1+b_1}{2}$ falls in the interval $\left[\frac{a_1b_2}{a_2},\frac{b_1a_2}{b_2}\right]$, then the midpoint is the minimizer.  If it does not, then (i) $\frac{b_1a_2}{b_2}$ is the minimizer, and (ii) it must be that either that $\frac{a_1+b_1}{2} > \frac{b_1a_2}{b_2}$, in which case our claim is valid, or  $\frac{a_1+b_1}{2} < \frac{a_1b_2}{a_2} < \frac{b_1a_2}{b_2}$.  However, we have just shown by contradiction that this cannot be the case.  Therefore, in Case 2 the minimizer must be no larger than the midpoint.
 \qed \end{proof}

This theorem gives an upper bound on the fraction through the interval the minimizer can fall (namely $\frac{1}{2}$).  Furthermore, this bound is sharp (i.e. it is obtained and therefore cannot be strengthened) because we know examples when the minimizer is exactly at the midpoint.  It would be nice to also obtain a sharp lower bound on this fraction.  By demonstrating that the minimizer cannot fall too close to the end points of the interval, we are providing mathematical evidence to justify the current choices of branching point in software, as discussed in \S\ref{intro}.  The following theorem gives a lower bound on this fraction when $a_2 \neq 0$, (when $a_2 =0$, we know that the minimizer will be exactly at the midpoint).  We note that because of the condition \ref{Omega}, the problem is no longer symmetric and therefore knowledge about the upper bound does not allow us to draw conclusions about the lower bound.

 \begin{theorem}
Given upper- and lower-bound parameters ($a_1$, $b_1$, $a_2$, $b_2$, $a_3$, $b_3$) satisfying \ref{Omega}, and $a_2 \neq 0$.  The branching point for variable $x_1$ that obtains the least total volume, never occurs at a point in the interval less than
$$\min\left\{\max\left \{\frac{a_1(b_2-a_2)}{a_2(b_1-a_1)},\frac{b_1a_2-a_1b_2}{b_1b_2-a_1b_2}\right\}, \frac{1}{2}\right\} $$
of the way through the interval.
  \end{theorem}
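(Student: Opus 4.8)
The plan is to read this off as an essentially immediate corollary of Theorem~\ref{x1flowchart} (equivalently, Procedure~\ref{fig:flowchart}). Write $c_1^*$ for the optimal branching point for $x_1$ and let $\phi \colonequals (c_1^* - a_1)/(b_1-a_1) \in [0,1]$ be the fraction of the way through $[a_1,b_1]$ at which it occurs; since $a_2 \neq 0$, we are in Case~1 or Case~2 of the proof of Theorem~\ref{x1flowchart}. Abbreviate $A \colonequals \tfrac{a_1(b_2-a_2)}{a_2(b_1-a_1)}$ and $B \colonequals \tfrac{b_1a_2-a_1b_2}{b_1b_2-a_1b_2}$ for the two quantities inside the $\max$ of the claimed bound.

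First I would record three routine observations. (i) The midpoint $q_2 = \tfrac{a_1+b_1}{2}$ sits at $\phi = \tfrac{1}{2}$. (ii) The Case~1 breakpoint $\tfrac{a_1b_2}{a_2}$ sits at $\phi = (\tfrac{a_1b_2}{a_2}-a_1)/(b_1-a_1) = A$, and the Case~2 breakpoint $\tfrac{b_1a_2}{b_2}$ sits at $\phi = (\tfrac{b_1a_2}{b_2}-a_1)/(b_1-a_1) = B$, using $b_2(b_1-a_1) = b_1b_2-a_1b_2$; by \ref{Omega} both breakpoints lie in $[a_1,b_1]$, so these are legitimate fractions, and moreover $q_3 \leq q_2$ by Equation~\ref{minordering}. (iii) Cross-multiplying (all bounds are positive) shows that the Case~1 hypothesis $\tfrac{b_1a_2}{b_2} \leq \tfrac{a_1b_2}{a_2}$ is equivalent to $b_1a_2^2 \leq a_1b_2^2$, which by the same elementary manipulation is equivalent to $B \leq A$. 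Hence $\max\{A,B\}=A$ in Case~1 and $\max\{A,B\}=B$ in Case~2, so the bound to be proved reads $\min\{A,\tfrac{1}{2}\}$ in Case~1 and $\min\{B,\tfrac{1}{2}\}$ in Case~2.

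The core of the argument is then a short case split. In Case~1, the proof of Theorem~\ref{x1flowchart} (summarized in Procedure~\ref{fig:flowchart}) guarantees $c_1^* \in \{q_3, q_2, \tfrac{a_1b_2}{a_2}\}$, with $q_3$ returned \emph{only} when $q_3 \geq \tfrac{a_1b_2}{a_2}$. Thus if $c_1^* = q_2$ then $\phi = \tfrac{1}{2}$; if $c_1^* = \tfrac{a_1b_2}{a_2}$ then $\phi = A$; and if $c_1^* = q_3$ then $q_3 \geq \tfrac{a_1b_2}{a_2}$, so by monotonicity of $t \mapsto (t-a_1)/(b_1-a_1)$ we get $\phi \geq A$. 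In each subcase $\phi \geq \min\{A,\tfrac{1}{2}\} = \min\{\max\{A,B\},\tfrac{1}{2}\}$. Case~2 is identical after swapping $\tfrac{a_1b_2}{a_2} \leftrightarrow \tfrac{b_1a_2}{b_2}$ and $A \leftrightarrow B$: Procedure~\ref{fig:flowchart} gives $c_1^* \in \{q_3, q_2, \tfrac{b_1a_2}{b_2}\}$ with $q_3$ returned only when $q_3 \geq \tfrac{b_1a_2}{b_2}$, so $\phi \geq \min\{B,\tfrac{1}{2}\}$. This proves the claim.

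I do not expect a genuine obstacle: all of the real work is already contained in Theorem~\ref{x1flowchart} and its four supporting lemmas, and the present statement simply repackages that case analysis. The only points requiring any care are (a) matching the two ratios $A$ and $B$ in the bound to the two candidate breakpoints --- in particular the small simplification $(\tfrac{b_1a_2}{b_2}-a_1)/(b_1-a_1) = (b_1a_2-a_1b_2)/(b_1b_2-a_1b_2)$ --- and (b) checking that the Case~1/Case~2 dichotomy coincides with the dichotomy $B \leq A$ versus $B > A$, which is what forces the $\max$ into the statement. As a sanity check against Theorem~\ref{upperbound}: if $A > \tfrac{1}{2}$ (respectively $B > \tfrac{1}{2}$), then the relevant breakpoint lies strictly beyond the midpoint while $q_3 \leq q_2 = \tfrac{a_1+b_1}{2}$, so by Theorem~\ref{x1flowchart} the optimal point must be exactly the midpoint and $\phi = \tfrac{1}{2} = \min\{A,\tfrac{1}{2}\}$; we do not attempt to show sharpness, though this bound is attained whenever the optimal branching point is one of the two breakpoints.
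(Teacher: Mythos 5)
Your proposal is correct and follows essentially the same route as the paper: both arguments reduce the claim to the list of candidate minimizers from Theorem~\ref{x1flowchart}, discard $q_3$ because it is returned only when it is at least the relevant breakpoint, observe that whichever breakpoint can be the minimizer is the larger of the two (which is where the $\max$ comes from), and then convert $\frac{a_1b_2}{a_2}$, $\frac{b_1a_2}{b_2}$, and $q_2$ into the fractions $A$, $B$, and $\tfrac12$. The only cosmetic difference is that you organize this as an explicit Case~1/Case~2 split keyed to $B\leq A$ versus $B>A$, whereas the paper phrases it as a single $\min\{\max\{\cdot,\cdot\},\tfrac{a_1+b_1}{2}\}$ bound before normalizing.
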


   \begin{proof}
There are four candidate points where the minimizer can occur.  Namely, $q_2=\frac{a_1+b_1}{2}$, $q_3$, $\frac{a_1b_2}{a_2}$, and $\frac{b_1a_2}{b_2}$.  Therefore

$$\min \left \{\frac{a_1+b_1}{2}, q_3, \frac{a_1b_2}{a_2}, \frac{b_1a_2}{b_2} \right\}, $$
is a trivial lower bound on this minimizer.

We know that if $q_3$ is the minimizer, then we must have $q_3 \geq \frac{a_1b_2}{a_2}$ (Case 1), or $q_3 \geq \frac{b_1a_2}{b_2}$ (Case 2), so we can discard this point.

Additionally, we know that if $\frac{a_1b_2}{a_2}$ is the minimizer, then we have $\frac{a_1b_2}{a_2} \geq \frac{b_1a_2}{b_2}$ (Case 1), and if $\frac{b_1a_2}{b_2}$ is the minimizer, then we have $\frac{b_1a_2}{b_2} > \frac{a_1b_2}{a_2}$ (Case 2).

Therefore we have that a lower bound on the minimizer is:

$$\min\left\{\max\left \{\frac{a_1b_2}{a_2},\frac{b_1a_2}{b_2}\right\}, \frac{a_1+b_1}{2}\right\}. $$
Moreover, a lower bound for the fraction of the interval where this point can fall is:

$$\min\left\{\max\left \{\frac{\frac{a_1b_2}{a_2} - a_1}{b_1-a_1},\frac{\frac{b_1a_2}{b_2}-a_1}{b_1-a_1}\right\}, \frac{\frac{a_1+b_1}{2}-a_1}{b_1-a_1}\right\} $$

$$=\min\left\{\max\left \{\frac{a_1(b_2-a_2)}{a_2(b_1-a_1)},\frac{b_1a_2-a_1b_2}{b_1b_2-a_1b_2}\right\}, \frac{1}{2}\right\}.$$
\qed \end{proof}

We note that this lower bound is unlikely to be sharp.  Consider the case where $a_1=0$, $a_2= \epsilon>0$ and $b_2=1$.  This bound becomes $\epsilon$, and is therefore not particularly informative, given that we can make $\epsilon$ as close to zero as we wish.  However, we have computationally checked many examples, and we have yet to find an example where the minimizer occurs less than $\sim 0.45$ of the way through the interval.  It would be nice to sharpen this bound, and our computations indicate that this should be possible.

\section{Branching on $x_2$ and $x_3$}
\label{x2x3}

We noted in \S\ref{x1} that because of the structure of the volume function of the convex hull, the second and third variables are interchangeable.  Therefore, the branching-point analyses for these variables will be equivalent.  To see how the results in this case are less complex than in the $x_1$ case, recall the condition \ref{Omega}, which due to our non-negativity assumption can be written as  $$\frac{a_1}{b_1} \leq \frac{a_2}{b_2} \leq \frac{a_3}{b_3}. $$

Now consider what happens to the quantity $\frac{a_2}{b_2}$ when we branch on $x_2$.  In the left interval, $a_2$ remains constant, and $b_2$ becomes the branching point, $c_2 < b_2$.  Therefore, $\frac{a_2}{b_2}$ cannot decrease further.  In the right interval $b_2$ remains constant and $a_2$ becomes the branching point, $c_2 > a_2$.  Therefore, again, $\frac{a_2}{b_2}$ cannot decrease further.  Because of this, the labeling for $x_1$ and $x_2$ will not have to be switched to ensure \ref{Omega} remains satisfied.  Furthermore, $x_2$ and $x_3$ are interchangeable in the formula, so we do not need to consider what happens when the ratios change such that $\frac{a_2}{b_2}>\frac{a_3}{b_3}$.

The case of $x_2$ and $x_3$ therefore both require the analysis of only one convex quadratic function.  This is formalized in the following theorem.

\begin{theorem}
\label{Hullmidpoint23}
Let $c_i \in [a_i,b_i]$ be the branching point for $x_i$, $i=2,3$.  With the convex-hull relaxation, the least total volume after branching is obtained when $c_i=(a_i+b_i)/2$, i.e., branching at the midpoint is optimal.
\end{theorem}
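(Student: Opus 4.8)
The plan rests on the observation recorded just before the theorem: if we branch on $x_2$, then in both children the ratio $a_2/b_2$ can only \emph{increase}, so the ordering $a_1/b_1 \le a_2/b_2$ of \ref{Omega} is automatically maintained and $x_1$ never has to be relabelled; moreover, since the volume formula of Theorem~\ref{TheoremPH} is symmetric in its second and third variables, it is irrelevant whether a child's ratio $a_2/b_2$ passes $a_3/b_3$. Hence, unlike the $x_1$ case, there is no piecewise behaviour: the total volume after branching at $c_2$ is the single function
\[
W(c_2)\colonequals V(a_1,b_1,a_2,c_2,a_3,b_3)+V(a_1,b_1,c_2,b_2,a_3,b_3),\qquad c_2\in[a_2,b_2].
\]
I would prove that $W$ is a strictly convex quadratic whose vertex lies at $(a_2+b_2)/2\in[a_2,b_2]$, which gives the result; the case of $x_3$ then follows by interchanging the second and third variables.

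The one thing that makes this clean is to write $V$ with the $x_1$- and $x_3$-bounds frozen. Collecting, in the bracketed factor of $V(l_1,u_1,l_2,u_2,l_3,u_3)$, the coefficients of $u_2$ and of $l_2$ gives
\[
V(a_1,b_1,l_2,u_2,a_3,b_3)=K\,(u_2-l_2)\,(A\,u_2-B\,l_2),
\]
with $K\colonequals(b_1-a_1)(b_3-a_3)/24$, $A\colonequals 5b_1b_3-b_1a_3-a_1a_3-3a_1b_3$ and $B\colonequals b_1b_3+3b_1a_3+a_1b_3-5a_1a_3$. Substituting $(l_2,u_2)=(a_2,c_2)$ for the left child and $(l_2,u_2)=(c_2,b_2)$ for the right child and adding, the terms linear in $c_2$ combine to $-(A+B)(a_2+b_2)c_2$ and the quadratic term to $(A+B)c_2^2$, so
\[
W(c_2)=K\bigl[(A+B)c_2^2-(A+B)(a_2+b_2)c_2+Ba_2^2+Ab_2^2\bigr],
\]
whose vertex sits at $c_2=(a_2+b_2)/2$ \emph{no matter what $A$ and $B$ are}. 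This is the heart of the argument, and it already explains why $x_2$ (and $x_3$) behaves so much more simply than $x_1$.

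All that remains is strict convexity, i.e.\ $K(A+B)>0$. Since $b_i>a_i$ for every $i$, $K>0$; and
\[
A+B=2\bigl(3(b_1b_3-a_1a_3)+(b_1a_3-a_1b_3)\bigr),
\]
where $b_1b_3-a_1a_3=b_3(b_1-a_1)+a_1(b_3-a_3)>0$ and $b_1a_3-a_1b_3\ge 0$ because \ref{Omega}, via Lemma~\ref{lem91}, is equivalent to $a_1/b_1\le a_3/b_3$. Hence $W$ is a strictly convex parabola minimised at $(a_2+b_2)/2$, which lies in the admissible interval $[a_2,b_2]$; interchanging $x_2$ and $x_3$ gives the statement for $x_3$. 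I do not anticipate a real obstacle here: the only spot requiring thought is this last sign check (which is exactly where \ref{Omega} enters), and everything else is forced once one notices the factorisation $V=K(u_2-l_2)(Au_2-Bl_2)$.
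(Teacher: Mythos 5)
Your proposal is correct and follows essentially the same route as the paper: both arguments note that branching on $x_2$ (or $x_3$) never forces a relabelling, write the total child volume as a single quadratic in $c_2$, verify that the leading coefficient is positive via Lemma~\ref{lem91}, and locate the minimum at the midpoint. Your factorisation $V=K(u_2-l_2)(Au_2-Bl_2)$ is a slightly tidier way of organising the same computation, but it is not a different proof.
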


\begin{proof}
We first consider branching on $x_2$.  Consider the sum of the two resulting volumes, given by the following function:
\begin{align*}
TV_2(c_2)=V(a_1,b_1,c_2,b_2,a_3,b_3)+V(a_1,b_1,a_2,c_2,a_3,b_3),
\end{align*}
which is quadratic in $c_2$.  The leading coefficient (i.e. second derivative) is $$TV_2(c_2)=\frac{1}{12}(b_1-a_1)(b_3-a_3)(3(b_1b_3-a_1a_3)+(b_1a_3-a_1b_3)),$$ which is greater than or equal to zero for all parameters satisfying \ref{Omega} and hence all $c_2 \in [a_2,b_2]$ (Lemma \ref{lem91}).  Therefore this function is convex.
Setting the first derivative equal to zero and solving for $c_2$, we obtain that the minimum occurs at $c_2=(a_2+b_2)/2$.  Similar analysis can be completed for $i=3$ to obtain the result. \qed
\end{proof}

\section{The optimal branching variable}
\label{compare}

Now that we have established the optimal branching point for each variable in all cases, it is interesting to compare the total volumes obtained when branching at the optimal point for \emph{each} variable.  In this section we establish the optimal branching variable.

\begin{theorem}
\label{Hullvariable1}
Given that the upper- and lower-bound parameters respect the labeling \ref{Omega}, if we assume optimal branching-point selection, then branching on $x_1$ obtains the least total volume, and branching on $x_3$ obtains the greatest total volume. Additionally, even if we branch at the midpoint for $x_1$ (which may not be optimal), this is at least as good as doing optimal branching-point selection (i.e., midpoint branching) on either $x_2$ or $x_3$.
\end{theorem}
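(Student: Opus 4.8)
The plan is to reduce everything to comparing a small number of explicit quadratic (or piecewise-quadratic) volume functions, evaluated at their minimizers, using the closed-form volume formula of Theorem \ref{TheoremPH}. Concretely, let $M_i$ denote the minimum total volume obtainable by branching on $x_i$ at its optimal point; by Theorem \ref{Hullmidpoint23} we have $M_2 = TV_2((a_2+b_2)/2)$ and $M_3 = TV_3((a_3+b_3)/2)$ with the analogous closed forms, and by Theorem \ref{x1flowchart} we have $M_1 = TV(c_1^*)$ (or $\widehat{TV}(c_1^*)$) where $c_1^*$ is the output of Procedure \ref{fig:flowchart}. The three inequalities to establish are $M_1 \le M_2$, $M_1 \le M_3$, $M_2 \le M_3$, plus the extra assertion that the (possibly suboptimal) value $\widehat{M}_1 := TV(q_2)$, i.e. midpoint branching on $x_1$, still satisfies $\widehat{M}_1 \le M_2$ and $\widehat{M}_1 \le M_3$. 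Since $\widehat{M}_1 \ge M_1$ trivially, the last assertion is the strongest statement, and proving it immediately gives $M_1 \le M_2$ and $M_1 \le M_3$; so the real work is the two inequalities $\widehat{M}_1 \le M_2$ and $\widehat{M}_1 \le M_3$, together with $M_2 \le M_3$.

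First I would dispose of $M_2 \le M_3$: because $x_2$ and $x_3$ are interchangeable in the volume formula subject only to the ordering $a_2/b_2 \le a_3/b_3$ from \ref{Omega}, the difference $M_3 - M_2$ should factor, after substituting the midpoints and simplifying with a computer algebra system, into a product of the form $(\text{positive box-length factors})\times(a_2 b_3 - a_3 b_2)\times(\dots)$, with each factor sign-determined by \ref{Omega} and $b_i>a_i$ (invoking Lemma \ref{lem91} as in Equations \ref{q1more}--\ref{q3less}). The main effort is $\widehat{M}_1 \le M_2$. Here I would compute $TV_2((a_2+b_2)/2)$ and $\widehat{TV}(q_2)$ both via $V$, noting that branching on $x_1$ at $q_2 = (a_1+b_1)/2$ lands in the middle piece in every case (in Case 1 the middle piece is $V_2$, whose minimizer is exactly $q_2$; in Case 2 it is $V_4$, also minimized at $q_2$; in Case 0 everything collapses to the midpoint), so $\widehat{TV}(q_2)$ has a single clean closed form $V(a_2,b_2,a_1,(a_1{+}b_1)/2,a_3,b_3) + V(a_2,b_2,(a_1{+}b_1)/2,b_1,a_3,b_3)$ — wait, one must be careful that in Case 1 the relevant decomposition for the middle piece is $V_2$, which uses the relabeled form; I would handle the two cases separately but symmetrically. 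In each case form the difference $M_2 - \widehat{M}_1$, expand, and show it factors into manifestly nonnegative pieces under \ref{Omega}. The inequality $\widehat{M}_1 \le M_3$ follows either by the same computation with $x_2,x_3$ swapped, or as a corollary of $\widehat{M}_1 \le M_2$ and $M_2 \le M_3$.

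The step I expect to be the main obstacle is verifying that the difference $M_2 - \widehat{M}_1$ (and likewise $M_3-M_2$) really does factor into sign-determined terms: these are degree-six polynomials in six variables, and while a CAS will produce a factorization, it may not be obviously nonnegative — it could come out as a sum of several monomial products that require grouping and repeated use of the substitutions $X := b_2-a_2 \ge 0$, $Y := b_1a_2 - a_1 b_2 \ge 0$ (as in Theorem \ref{upperbound}) and $Z := a_3 b_2 - a_2 b_3 \ge 0$ to expose nonnegativity. A secondary subtlety is the bookkeeping of which relabeled $V$-expression is the correct one for the middle piece in each of Cases 0, 1, 2, so that $\widehat{M}_1$ is computed consistently with Procedure \ref{fig:flowchart}; I would state this as a short preliminary observation (mirroring the case split in the proof of Theorem \ref{x1flowchart}) before doing the algebra. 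Once the factorizations are in hand, the conclusion is immediate, and I would close by remarking that sharpness of the "branch on $x_1$" conclusion is witnessed by the same example families used earlier in \S\ref{x1}.
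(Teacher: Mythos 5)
Your overall reduction matches the paper's: prove that midpoint branching on $x_1$ beats optimal (midpoint) branching on $x_2$, prove $M_2\le M_3$, and let $M_1\le \widehat{M}_1$ do the rest. The $M_2\le M_3$ step is handled exactly as in the paper (the difference factors as $(b_3-a_3)(b_2-a_2)(b_1-a_1)^2(b_2a_3-a_2b_3)/12\ge 0$ via Lemma~\ref{lem91}), and that part of your plan is fine.

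However, there is a genuine gap in the main step: you assert that the midpoint $q_2=(a_1+b_1)/2$ ``lands in the middle piece in every case.'' That is false. What the proof of Theorem~\ref{upperbound} actually gives is only a \emph{lower} bound, $q_2\ge\min\{a_1b_2/a_2,\,b_1a_2/b_2\}$, which rules out the leftmost piece $V_1$ but not the rightmost piece $V_3$. In Case~1 the midpoint can exceed $a_1b_2/a_2$ (e.g.\ $a_1=1$, $b_1=35$, $a_2=2$, $b_2=12$ gives $q_2=18>6=a_1b_2/a_2$, one of the paper's own examples), and then $TV(q_2)=V_3(q_2)$, not $V_2(q_2)$. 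This matters because, by Theorem~\ref{globalconvexity}, $TV$ is the pointwise \emph{maximum} of its pieces, so in that regime $V_2(q_2)\le V_3(q_2)=TV(q_2)$: proving $V_2(q_2)\le M_2$ only bounds an underestimate of $\widehat{M}_1$ and does not yield $\widehat{M}_1\le M_2$. The paper therefore splits into three sub-cases according to whether the midpoint lies in the $V_2$, $V_3$, or $V_4$ subdomain, and carries out a separate factorization for each; the $V_3$ computation (difference $(b_3-a_3)^2(b_2-a_2)(b_1-a_1)\bigl(4(b_1a_2-a_1b_2)+a_2(b_1-a_1)\bigr)/48\ge 0$) is exactly the piece your proposal omits. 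Adding that third sub-case (and the corresponding closed form for $V_3(q_2)$) would close the gap; the rest of your outline, including the anticipated reliance on $b_1a_2-a_1b_2\ge 0$ and $b_2a_3-a_2b_3\ge 0$ from Lemma~\ref{lem91} to sign the factors, is consistent with what the paper does.
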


\begin{proof}

First, we establish that branching optimally (at the midpoint) on variable $x_2$ obtains a lower total volume than branching optimally (at the midpoint) on variable $x_3$.

The optimal total volume when branching on variable $x_3$ is:
\begin{align*}&\frac{(b_3-a_3)(b_2-a_2)(b_1-a_1)}{48} \;\; \times \\ &\hspace*{0mm} (7a_1a_2a_3+a_1a_2b_3-3a_1a_3b_2-5a_1b_2b_3-5a_2a_3b_1-3a_2b_1b_3+a_3b_1b_2+7b_1b_2b_3).\end{align*}

The optimal total volume when branching on variable $x_2$ is:
\begin{align*}&\frac{(b_3-a_3)(b_2-a_2)(b_1-a_1)}{48} \;\; \times \\ &\hspace*{0mm}(7a_1a_2a_3-3a_1a_2b_3+a_1a_3b_2-5a_1b_2b_3-5a_2a_3b_1+a_2b_1b_3-3a_3b_1b_2+7b_1b_2b_3).\end{align*}

Therefore, the difference in total volume from branching on $x_3$ compared with $x_2$ is: $$\frac{(b_3-a_3)(b_2-a_2)(b_1-a_1)^2(b_2a_3-a_2b_3)}{12},$$ which is greater than or equal to zero by Lemma \ref{lem91}.  Therefore, if we assume optimal branching, branching on $x_3$ always results in a greater volume than branching on $x_2$.

Now let us consider the optimal total volume when branching on $x_1$, this quantity must always be less than or equal to the total volume when branching at the midpoint of the interval (it will be equal exactly when the midpoint is the optimal branching point).  Therefore, if we can establish that branching on variable $x_1$ at the midpoint always obtains a lesser total volume than branching on variable $x_2$ at the midpoint, we will have shown our claim.

Recall Figure \ref{fig:branch}.  We know from the proof of Theorem \ref{upperbound}, that the midpoint can never be less than: $\min\left\{\frac{a_1b_2}{a_2}, \frac{b_1a_2}{b_2}\right\}$.  Therefore, in \emph{every} case, the midpoint must fall in a subdomain where: (i) the labeling for left interval stays the same, and the labeling for the right changes; (ii) the labeling changes for both intervals; or, (iii) the labeling remains the same for both intervals.  This means that we are interested in the function value (total volume) at the midpoint for the functions $V_2(c_1)$, $V_3(c_1)$ and $V_4(c_1)$.

The total volume of branching (on variable $x_1$) at the midpoint if it occurs in the subdomain corresponding to $V_2$ is:
\begin{align*}
&\frac{(b_3-a_3)(b_2-a_2)(b_1-a_1)}{48} \times \\
&(7a_1a_2a_3-3a_1a_2b_3-5a_1a_3b_2+a_1b_2b_3+a_2a_3b_1-5a_2b_1b_3-3a_3b_1b_2+7b_1b_2b_3).
\end{align*}
Therefore, the difference in total volume from branching on $x_2$ compared with this quantity is: $$\frac{(b_3-a_3)^2(b_2-a_2)(b_1-a_1)(b_1a_2-a_1b_2)}{8},$$ which is greater than or equal to zero by Lemma \ref{lem91}.

The total volume of branching (on variable $x_1$) at the midpoint if it occurs in the subdomain corresponding to $V_3$ is:
\begin{align*}
&\frac{(b_3-a_3)(b_2-a_2)(b_1-a_1)}{48} \times \\
&(6a_1a_2a_3-2a_1a_2b_3-3a_1a_3b_2-a_1b_2b_3-4a_2b_1b_3-3a_3b_1b_2+7b_1b_2b_3).
\end{align*}
Therefore, the difference in total volume from branching on $x_2$ compared with this quantity is: $$\frac{(b_3-a_3)^2(b_2-a_2)(b_1-a_1)(4(b_1a_2-a_1b_2)+a_2(b_1-a_1))}{48},$$ which is greater than or equal to zero by Lemma \ref{lem91}.

The total volume of branching (on variable $x_1$) at the midpoint if it occurs in the subdomain corresponding to $V_4$ is:
\begin{align*}
&\frac{(b_3-a_3)(b_2-a_2)(b_1-a_1)}{24} \times \\
&(3a_1a_2a_3-a_1a_2b_3-a_1a_3b_2-a_1b_2b_3-a_2a_3b_1-a_2b_1b_3-a_3b_1b_2+3b_1b_2b_3).
\end{align*}
Therefore, the difference in total volume from branching on $x_2$ compared with this quantity is: $$\frac{(b_3-a_3)^2(b_2-a_2)(b_1-a_1)(b_1b_2-a_1a_2+3(b_1a_2-a_1b_2))}{48},$$ which is greater than or equal to zero by Lemma \ref{lem91}.

Therefore, for each one of these possible scenarios, optimally branching on $x_2$ results in a greater volume than branching on $x_1$ at the midpoint. And so we can conclude that given optimal branching, branching on $x_1$ obtains the least total volume, and branching on $x_3$ obtains the greatest total volume.

 \qed
\end{proof}

\section{Concluding remarks and future work}
\label{conclusion}

We have presented some analytic results on branching variable and branching-point selection in the context of sBB applied to models having functions involving the multiplication of three or more terms.  In particular, for trilinear monomials $f=x_1x_2x_3$ on a box domain satisfying \ref{Omega},
 we have shown that when the convex-hull relaxation is used, and the branching variable is $x_2$ or $x_3$, branching at the commonly-used midpoint results in the least total volume.

We have presented a simple procedure for obtaining the optimal branching point when using the convex-hull relaxation and branching on variable $x_1$.  We have provided a sharp upper bound on where in the interval the minimizer can occur, and we have also obtained a lower bound for this fraction.  By computationally checking many examples, we have evidence to suggest that this lower bound can be sharpened, thus providing analysis that backs up software's current choice of branching point.  Furthermore, we have shown that the piecewise-quadratic functions we have been considering are globally convex over their entire domain.

Given that we branch at an optimal branching point, we have also compared the choice of branching variable.  We demonstrate that branching on $x_1$ gives the least total volume.

We are in the process of  carrying out a similar analysis to what we have done here, but for the best of the double-McCormick convexifications
rather than for the convex-hull relaxation. However, due to the structure of the volume formula for the best double-McCormick convexification (see \cite{SpeakmanLee2015}), our task is significantly more complex.

Finally, we hope that our mathematical results can be used as some guidance
toward justifying, developing and refining practical branching rules.
We believe that our work is just a first step in this direction.
In this regard, we hope to further extend our mathematical analysis
to directly deal with variables appearing in multiple non-linear terms. \\ \\

\begin{acknowledgements}
This work was supported in part by ONR grants N00014-14-1-0315 and N00014-17-1-2296. The authors gratefully acknowledge
conversations with Ruth Misener and Nick Sahinidis concerning
how branching points are selected in  \verb;ANTIGONE; and \verb;BARON;.
\end{acknowledgements}

\bibliographystyle{spmpsci}      
\bibliography{bibthesis}   

\hspace*{55mm}

\appendix{\noindent {\normalsize \bf Appendix: technical propositions and lemmas}\\
\label{app}

In this section, we provide the technical propositions and lemmas used for our analysis.

\begin{proposition}
\label{lemAlessH}
Given that the upper- and lower-bound parameters respect the labeling \ref{Omega}, and $\frac{b_1a_2}{b_2} \leq \frac{a_1b_2}{a_2}$,
\begin{equation*}
V_1(q_1) \leq V_2\left(\frac{a_1b_2}{a_2}\right) = V_3\left(\frac{a_1b_2}{a_2}\right).
\end{equation*}

\end{proposition}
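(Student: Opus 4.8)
The plan is to split the statement into its two halves and dispatch each with one short observation. The equality $V_2\!\left(\frac{a_1b_2}{a_2}\right) = V_3\!\left(\frac{a_1b_2}{a_2}\right)$ is not new work: the value $c_1 = a_1b_2/a_2$ is exactly the breakpoint at which the two relabelings used to build $TV(c_1)$ coincide, so this is precisely the equality already recorded in Lemma \ref{HlessB}. (If one wanted a self-contained check, $V_2$ and $V_3$ share the summand $V(a_2,b_2,c_1,b_1,a_3,b_3)$, and a computer-algebra computation gives $V_3(c_1)-V_2(c_1) = \frac{(b_3-a_3)^2(c_1-a_1)(b_2-a_2)(c_1a_2-a_1b_2)}{12}$, whose factor $c_1a_2-a_1b_2$ vanishes at $c_1 = a_1b_2/a_2$.) So I would simply cite Lemma \ref{HlessB} for the equality and concentrate on the inequality $V_1(q_1)\le V_2\!\left(\frac{a_1b_2}{a_2}\right)$.

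For the inequality, the key point is that $q_1$ is \emph{by definition} the unconstrained minimizer of $V_1(c_1)$, which — as already observed in the proof of Theorem \ref{x1flowchart} — is a quadratic with strictly positive leading coefficient whenever \ref{Omega} holds, hence strictly convex on all of $\R$. Therefore $V_1(q_1)\le V_1(c_1)$ for every real $c_1$, and in particular $V_1(q_1)\le V_1\!\left(\frac{a_1b_2}{a_2}\right)$. Thus it suffices to prove the pointwise inequality $V_1\!\left(\frac{a_1b_2}{a_2}\right)\le V_2\!\left(\frac{a_1b_2}{a_2}\right)$.

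For that last step I would use the explicit difference (a routine computer-algebra computation, already recorded inside the proof of Theorem \ref{globalconvexity}, noting that $V_1$ and $V_2$ share the summand $V(a_2,b_2,a_1,c_1,a_3,b_3)$):
$$V_1(c_1) - V_2(c_1) = \frac{(b_3-a_3)^2(b_1-c_1)(b_2-a_2)(b_1a_2-c_1b_2)}{12},$$
and evaluate it at $c_1 = \frac{a_1b_2}{a_2}$. There the four factors have signs: $(b_3-a_3)^2\ge 0$ and $b_2-a_2>0$ are immediate; $b_1 - \frac{a_1b_2}{a_2}\ge 0$ because \ref{Omega} gives $a_1/b_1\le a_2/b_2$, i.e. $a_1b_2\le a_2b_1$; and $b_1a_2 - \frac{a_1b_2}{a_2}\,b_2 = \frac{b_1a_2^2 - a_1b_2^2}{a_2}\le 0$, since the hypothesis $\frac{b_1a_2}{b_2}\le\frac{a_1b_2}{a_2}$ is, after multiplying through by $a_2b_2>0$, exactly $b_1a_2^2\le a_1b_2^2$. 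Hence the product is $\le 0$, so $V_1\!\left(\frac{a_1b_2}{a_2}\right)\le V_2\!\left(\frac{a_1b_2}{a_2}\right)$, and chaining gives $V_1(q_1)\le V_1\!\left(\frac{a_1b_2}{a_2}\right)\le V_2\!\left(\frac{a_1b_2}{a_2}\right) = V_3\!\left(\frac{a_1b_2}{a_2}\right)$, as claimed.

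There is essentially no obstacle here beyond bookkeeping: the only places needing care are (i) trusting (or re-deriving) the CAS expression for $V_1 - V_2$, and (ii) orienting the equivalence between the Case-1 hypothesis and $b_1a_2^2\le a_1b_2^2$ correctly when reading off the sign of the last factor. Note also that $a_2\neq 0$ is implicit, since $\frac{a_1b_2}{a_2}$ appears in the statement, so division by $a_2$ is legitimate throughout.
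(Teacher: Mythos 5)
Your proof is correct, but it takes a genuinely different and noticeably lighter route than the paper's. The paper attacks $V_2\bigl(\tfrac{a_1b_2}{a_2}\bigr)-V_1(q_1)$ head-on: it expands this difference as $\tfrac{(b_3-a_3)(b_2-a_2)}{48(4b_2b_3-a_2b_3-3a_2a_3)a_2^2}\,(pa_1^2+qa_1+r)$, proves $p\geq 0$ via a decomposition $b_3Y+a_3Z$ together with Lemma \ref{lem94}, then minimizes the quadratic over $a_1$ and verifies that the minimum value is nonnegative --- a fairly heavy symbolic computation. You instead interpose the point $V_1\bigl(\tfrac{a_1b_2}{a_2}\bigr)$: the inequality $V_1(q_1)\leq V_1\bigl(\tfrac{a_1b_2}{a_2}\bigr)$ is free once one knows $V_1$ is a convex quadratic with unconstrained minimizer $q_1$ (both facts are established in the proof of Theorem \ref{x1flowchart}), and the remaining inequality $V_1\bigl(\tfrac{a_1b_2}{a_2}\bigr)\leq V_2\bigl(\tfrac{a_1b_2}{a_2}\bigr)$ is a four-factor sign check on the difference $V_1(c_1)-V_2(c_1)=\tfrac{(b_3-a_3)^2(b_1-c_1)(b_2-a_2)(b_1a_2-c_1b_2)}{12}$, which the paper computes anyway in the proof of Theorem \ref{globalconvexity}. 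Your sign analysis is right: $b_1-\tfrac{a_1b_2}{a_2}\geq 0$ follows from Lemma \ref{lem91}, and $b_1a_2-\tfrac{a_1b_2}{a_2}b_2\leq 0$ is exactly the Case-1 hypothesis $b_1a_2^2\leq a_1b_2^2$ after clearing the positive denominator $a_2$. There is no circularity in borrowing these identities, since neither Lemma \ref{HlessB} nor the differences in Theorem \ref{globalconvexity} rely on Proposition \ref{lemAlessH}. What your approach buys is the elimination of the entire quadratic-in-$a_1$ argument (the decomposition of $p$, the minimization, and the substitution); what the paper's approach buys is that it is self-contained within the appendix and does not lean on the convexity/minimizer facts or the $V_1-V_2$ difference established in the body of the paper. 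Both are valid; yours is the more economical given what is already available elsewhere in the text.
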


\begin{proof}{}

It is easy to check that $V_2\left(\frac{a_1b_2}{a_2}\right) = V_3\left(\frac{a_1b_2}{a_2}\right)$.

\begin{dmath*}
V_2\left(\frac{a_1b_2}{a_2}\right) - V_1(q_1) = \frac{(b_3-a_3)(b_2-a_2)}{48(4b_2b_3-a_2b_3-3a_2a_3)a_2^2} \times \left (pa_1^2 + qa_1 + r \right),
\end{dmath*}

\noindent where
\begin{align*}
p&=\Big(-3a_2a_3-a_2b_3+b_2a_3+3b_2b_3\Big)\times \\
\qquad \qquad&\Big(-3a_2^3a_3-a_2^3b_3+13a_2^2b_2a_3+7a_2^2b_2b_3-12a_2b_2^2a_3-20a_2b_2^2b_3+16b_2^3b_3\Big)\\
&=\Big(3(b_2b_3-a_2a_3)+b_2a_3-a_2b_3\Big) \times \\
\qquad \qquad &\Big((-3a_2^3+13a_2^2b_2-12a_2b_2^2)a_3 + (-a_2^3+7a_2^2b_2-20a_2b_2^2+16b_2^3)b_3\Big),\\[1.2em]
q&=4a_2b_1(2a_2^2a_3-3a_2b_2a_3-3a_2b_2b_3+4b_2^2b_3) \\ & \qquad\qquad\qquad\qquad\qquad\qquad\qquad\times (3a_2a_3+a_2b_3-b_2a_3-3b_2b_3),\\[1.2em]
r&=4a_2^2b_1^2(a_2a_3+a_2b_3-2b_2b_3)^2.
\end{align*}

To show that $V_2\left(\frac{a_1b_2}{a_2}\right) - V_1(q_1)$ is non-negative for all parameters satisfying \ref{Omega}, we will show that $pa_1^2 + qa_1 + r \geq 0$ for all parameters satisfying \ref{Omega}.

We observe:
\begin{dmath*}
\Big((-a_2^3+7a_2^2b_2-20a_2b_2^2+16b_2^3)b_3 + (-3a_2^3+13a_2^2b_2-12a_2b_2^2)a_3\Big) \equalscolon b_3Y + a_3Z,
\end{dmath*}
where
\begin{equation*}Y + Z = 4(b_2-a_2)(2b_2-a_2)^2 \geq 0, \end{equation*}
and
\begin{equation*}
Y = \bigg(b_2-a_2\bigg)\bigg(4b_2(b_2-a_2)+12b_2^2+a_2^2\bigg) + 2a_2^2b_2 \geq 0.
\end{equation*}

Therefore, by Lemma \ref{lem94} we have that $b_3Y + a_3Z$ is non-negative and so $p$ is non-negative (Lemma \ref{lem91}).  From this we know that $pa_1^2 + qa_1 + r$ is a convex function in $a_1$ and we can find the minimizer by setting the derivative to zero and solving for $a_1$.  The minimum occurs at
$$a_1 = \frac{2b_1a_2(2a_2^2a_3-3a_2b_2a_3-3a_2b_2b_3+4b_2^2b_3)}{(-3a_2^3a_3-a_2^3b_3+13a_2^2b_2a_3+7a_2^2b_2b_3-12a_2b_2^2a_3-20a_2b_2^2b_3+16b_2^3b_3)}.$$

Substituting this in to $pa_1^2 + qa_1 + r$, we obtain that the minimum value of this quadratic is:
$$\frac{4a_2^2b_1^2(b_3-a_3)(b_2-a_2)^3(3a_2a_3+a_2b_3-4b_2b_3)^2}{(-3a_2^3a_3-a_2^3b_3+13a_2^2b_2a_3+7a_2^2b_2b_3-12a_2b_2^2a_3-20a_2b_2^2b_3+16b_2^3b_3)}.$$

In demonstrating the non-negativity of $p$, we have already shown that the denominator is non-negative, and it is easy to see that the numerator is non-negative for all values of the parameters satisfying \ref{Omega}.  Therefore $pa_1^2 + qa_1 + r \geq 0$, and consequently, $V_2\left(\frac{a_1b_2}{a_2}\right) - V_1(q_1) \geq 0$ as required.

\qed \end{proof}

\setcounter{lemma}{0}

\begin{lemma}
Given that the upper- and lower-bound parameters respect the labeling \ref{Omega}, and $\frac{b_1a_2}{b_2} \leq \frac{a_1b_2}{a_2}$,
\begin{equation*}V_1\left(\frac{b_1a_2}{b_2}\right)=V_2\left(\frac{b_1a_2}{b_2}\right) \geq V_2\left(\frac{a_1b_2}{a_2}\right) = V_3\left(\frac{a_1b_2}{a_2}\right)
\end{equation*}
\end{lemma}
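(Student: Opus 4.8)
The plan is to establish the chain of (in)equalities in three pieces: the two equalities are continuity statements, and the single inequality $V_2\left(\frac{b_1a_2}{b_2}\right) \geq V_2\left(\frac{a_1b_2}{a_2}\right)$ is the substance. First I would verify the equalities $V_1\left(\frac{b_1a_2}{b_2}\right)=V_2\left(\frac{b_1a_2}{b_2}\right)$ and $V_2\left(\frac{a_1b_2}{a_2}\right) = V_3\left(\frac{a_1b_2}{a_2}\right)$ directly: from the definitions of $V_1, V_2, V_3$ as sums of two copies of the multilinear function $V$, the first equality amounts to $V(c_1,b_1,a_2,b_2,a_3,b_3) = V(a_2,b_2,c_1,b_1,a_3,b_3)$ evaluated at $c_1 = \frac{b_1a_2}{b_2}$, which holds because at that value $\frac{c_1}{b_1} = \frac{a_2}{b_2}$, so the two boxes are the same box up to the (legitimate) interchange of the first two coordinate pairs; the second equality is the analogous statement at $c_1 = \frac{a_1b_2}{a_2}$, where $\frac{a_1}{c_1} = \frac{a_2}{b_2}$. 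These are exactly the critical-point calculations already described in the text preceding Theorem \ref{x1flowchart}, so little work is needed.

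The heart of the matter is the inequality. Here I would use the fact, established in the proof of Theorem \ref{globalconvexity}, that $V_2(c_1)$ is a strictly convex quadratic in $c_1$ whose unique minimizer is $q_2 = \frac{a_1+b_1}{2}$. Given strict convexity, $V_2$ is decreasing on $(-\infty, q_2]$ and increasing on $[q_2, \infty)$; in particular, for two points $s \leq t$ lying on the same side of $q_2$ we have $V_2(s) \geq V_2(t)$ if both lie to the left of $q_2$ (or the reverse if both lie to the right). So the proof reduces to showing that both $\frac{b_1a_2}{b_2}$ and $\frac{a_1b_2}{a_2}$ lie in $[a_1,q_2]$, i.e., to the left of the midpoint. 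The ordering $\frac{b_1a_2}{b_2} \leq \frac{a_1b_2}{a_2}$ is the hypothesis of the lemma, so it then suffices to prove $\frac{a_1b_2}{a_2} \leq \frac{a_1+b_1}{2}$, together with $a_1 \le \frac{b_1 a_2}{b_2}$ (the latter being immediate from \ref{Omega} since $\frac{a_1}{b_1}\le\frac{a_2}{b_2}$). Then $V_2\left(\frac{b_1a_2}{b_2}\right) \geq V_2\left(\frac{a_1b_2}{a_2}\right)$ follows because $V_2$ is decreasing on $[a_1,q_2]$ and $\frac{b_1a_2}{b_2} \leq \frac{a_1b_2}{a_2}$.

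The main obstacle, then, is the single inequality $\frac{a_1b_2}{a_2} \leq \frac{a_1+b_1}{2}$ under hypotheses \ref{Omega} and $\frac{b_1a_2}{b_2}\le\frac{a_1b_2}{a_2}$. I expect this to be a short algebraic argument of exactly the type already carried out in the proof of Theorem \ref{upperbound}: clearing denominators (note $a_2 > 0$ here, since if $a_2 = 0$ then $\frac{a_1b_2}{a_2}$ is undefined and we are outside the scope of this lemma), the claim becomes $2a_1b_2 \leq a_2(a_1+b_1) = a_1a_2 + b_1a_2$, i.e. $a_1(2b_2 - a_2) \le b_1 a_2$, equivalently $a_1(b_2-a_2) \leq b_1a_2 - a_1b_2$. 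Writing $X := b_2 - a_2 \geq 0$ and $Y := b_1a_2 - a_1b_2 \geq 0$ (both non-negative by Lemma \ref{lem91}), this reads $a_1 X \leq Y$. From the case hypothesis $\frac{b_1a_2}{b_2}\le\frac{a_1b_2}{a_2}$, i.e. $b_1a_2^2 \le a_1 b_2^2$, I would derive $a_1 X \le Y$ by a direct manipulation — indeed $Y - a_1 X = b_1a_2 - a_1b_2 - a_1b_2 + a_1a_2 = a_2(a_1+b_1) - 2a_1b_2$, and multiplying through by $b_2$ and using $a_1 b_2^2 \ge b_1 a_2^2$ reduces it to a polynomial inequality in $a_1, b_1, a_2, b_2$ that should factor favorably given $b_i > a_i$. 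If that last reduction does not close cleanly, the fallback is the fully explicit route of Proposition \ref{lemAlessH}: form the difference $V_2\left(\frac{b_1a_2}{b_2}\right) - V_2\left(\frac{a_1b_2}{a_2}\right)$, recognize it (by the second-derivative/leading-coefficient computation already in the text) as a constant multiple of $\left(\frac{a_1b_2}{a_2} - \frac{b_1a_2}{b_2}\right)\left(q_2 - \text{something}\right)$, and verify non-negativity of each factor under \ref{Omega} using Lemma \ref{lem91}.
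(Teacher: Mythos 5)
Your two equalities are fine, and your identification of $V_2\bigl(\tfrac{b_1a_2}{b_2}\bigr)\ge V_2\bigl(\tfrac{a_1b_2}{a_2}\bigr)$ as the real content is correct. But the main route you propose has a genuine gap: the intermediate claim $\tfrac{a_1b_2}{a_2}\le\tfrac{a_1+b_1}{2}$ is \emph{false} under the stated hypotheses. Take the paper's own ``solid curve'' example $a_1=1$, $b_1=34$, $a_2=2$, $b_2=36$, $a_3=12$, $b_3=35$: \ref{Omega} holds, the Case~1 hypothesis $b_1a_2^2=136\le a_1b_2^2=1296$ holds, yet $\tfrac{a_1b_2}{a_2}=18>17.5=q_2$. (Indeed, if your claim were true, the midpoint $q_2$ could never lie strictly inside $\bigl(\tfrac{b_1a_2}{b_2},\tfrac{a_1b_2}{a_2}\bigr)$, which would contradict the existence of the $q_2$-is-the-minimizer branch of Procedure~\ref{fig:flowchart}.) In your own notation, the reduction $a_1X\le Y$ fails here: $a_1X=34>32=Y$. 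What Theorem~\ref{upperbound} actually establishes is only that $q_2$ cannot be less than \emph{both} breakpoints, not that it is greater than both, so ``both points lie in $[a_1,q_2]$'' cannot be salvaged.

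The convexity idea can still be rescued with a small change: since $V_2$ is a convex quadratic symmetric about its vertex $q_2$, you need only show that $\tfrac{b_1a_2}{b_2}$ is at least as far from $q_2$ as $\tfrac{a_1b_2}{a_2}$ is, which (given the ordering hypothesis) reduces to
\begin{equation*}
\frac{b_1a_2}{b_2}+\frac{a_1b_2}{a_2}\;\le\; a_1+b_1 ,
\end{equation*}
and this factors cleanly as $(b_2-a_2)(b_1a_2-a_1b_2)/(a_2b_2)\ge 0$, non-negative by Lemma~\ref{lem91}. That corrected argument is genuinely different from, and arguably more conceptual than, what the paper does: the paper simply computes the difference $V_2\bigl(\tfrac{b_1a_2}{b_2}\bigr)-V_2\bigl(\tfrac{a_1b_2}{a_2}\bigr)$ in closed form and exhibits it as a product of factors, each non-negative by \ref{Omega}, Lemma~\ref{lem91}, and the Case~1 hypothesis $a_1b_2^2-a_2^2b_1\ge 0$ --- which is exactly the fallback you sketched at the end. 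As written, though, your primary argument does not go through.
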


\begin{proof}{}
It is easy to check that $V_1\left(\frac{b_1a_2}{b_2}\right)=V_2\left(\frac{b_1a_2}{b_2}\right)$ and $V_2\left(\frac{a_1b_2}{a_2}\right) = V_3\left(\frac{a_1b_2}{a_2}\right)$.

Furthermore,
\begin{dmath*}
V_2\left(\frac{b_1a_2}{b_2}\right) - V_2\left(\frac{a_1b_2}{a_2}\right) = \frac{(b_3-a_3)(b_2-a_2)^2(b_1a_2-a_1b_2)(a_1b_2^2-a_2^2b_1)(3(b_2b_3-a_2a_3)+b_2a_3-a_2b_3)}{12a_2^2b_2^2} \geq 0,
\end{dmath*}
as required.

\qed \end{proof}


\begin{proposition}
\label{lemAlessH2}
Given that the upper- and lower-bound parameters respect the labeling \ref{Omega}, and $\frac{b_1a_2}{b_2} > \frac{a_1b_2}{a_2}$,
\begin{equation*}
V_1(q_1) \leq V_4\left(\frac{b_1a_2}{b_2}\right) = V_3\left(\frac{b_1a_2}{b_2}\right).
\end{equation*}

\end{proposition}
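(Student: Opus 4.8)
The plan is to bypass the long computation used for Proposition~\ref{lemAlessH} and instead build the whole argument on the two difference formulas already derived inside the proof of Theorem~\ref{globalconvexity}. There are two things to check: the equality $V_4\left(\frac{b_1a_2}{b_2}\right)=V_3\left(\frac{b_1a_2}{b_2}\right)$ and the inequality $V_1(q_1)\le V_4\left(\frac{b_1a_2}{b_2}\right)$.

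For the equality I would invoke
\[
V_3(c_1)-V_4(c_1)=\frac{(b_3-a_3)^2(b_1-c_1)(b_2-a_2)(c_1b_2-b_1a_2)}{12},
\]
which appears in the proof of Theorem~\ref{globalconvexity}: at $c_1=\frac{b_1a_2}{b_2}$ the factor $c_1b_2-b_1a_2$ vanishes, so $V_3\left(\frac{b_1a_2}{b_2}\right)=V_4\left(\frac{b_1a_2}{b_2}\right)$. (Equivalently, $V_3$ and $V_4$ share one summand, and the other two summands, $V(a_2,b_2,c_1,b_1,a_3,b_3)$ and $V(c_1,b_1,a_2,b_2,a_3,b_3)$, agree when $\frac{c_1}{b_1}=\frac{a_2}{b_2}$, as one checks by substituting into the formula for $V$.)

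For the inequality I would use the companion identity
\[
V_1(c_1)-V_4(c_1)=\frac{(b_3-a_3)^2(c_1-a_1)(b_2-a_2)(a_1b_2-c_1a_2)}{12}
\]
(also from the proof of Theorem~\ref{globalconvexity}) at $c_1=\frac{b_1a_2}{b_2}$. There, $c_1-a_1=\frac{b_1a_2-a_1b_2}{b_2}\ge 0$ by \ref{Omega} and Lemma~\ref{lem91}, while $a_1b_2-c_1a_2=\frac{a_1b_2^2-b_1a_2^2}{b_2}<0$ exactly because we are in the case $\frac{b_1a_2}{b_2}>\frac{a_1b_2}{a_2}$, i.e.\ $b_1a_2^2>a_1b_2^2$ (the hypothesis forces $a_2>0$, so these manipulations are legitimate). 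Hence $V_1\left(\frac{b_1a_2}{b_2}\right)-V_4\left(\frac{b_1a_2}{b_2}\right)\le 0$. Since $V_1(c_1)$ is a strictly convex quadratic whose unconstrained minimizer is $q_1$ (computed in the proof of Theorem~\ref{x1flowchart}), we have $V_1(q_1)\le V_1\left(\frac{b_1a_2}{b_2}\right)$, and combining this with the previous inequality and the equality yields
\[
V_1(q_1)\ \le\ V_1\!\left(\frac{b_1a_2}{b_2}\right)\ \le\ V_4\!\left(\frac{b_1a_2}{b_2}\right)\ =\ V_3\!\left(\frac{b_1a_2}{b_2}\right),
\]
which is the claim.

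With this route there is essentially no obstacle: everything reduces to sign-checking two already-established polynomial identities together with the convexity of $V_1$. The only place where real work would appear is if one insisted on a self-contained argument in the exact style of Proposition~\ref{lemAlessH} — expanding $V_4\left(\frac{b_1a_2}{b_2}\right)-V_1(q_1)$ with a computer algebra system, peeling off an obviously-signed prefactor, exhibiting the remainder as a quadratic in $a_1$ (or $b_1$), showing its leading coefficient is non-negative, minimizing over that variable, and checking the minimum value is non-negative. There the main obstacle is the non-negativity of the leading coefficient, which would require a decomposition of the form $b_3Y+a_3Z$ with $Y+Z\ge 0$ and $Y\ge 0$ plus the appropriate Appendix lemma (the analogue of Lemma~\ref{lem94}). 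I would prefer the first route, since it avoids all of that.
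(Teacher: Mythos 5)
Your proof is correct, and it takes a genuinely different, lighter route than the paper's. The paper attacks the inequality head-on: it expands $V_4\left(\frac{b_1a_2}{b_2}\right)-V_1(q_1)$ symbolically, peels off the signed prefactor $\frac{(b_3-a_3)(b_2-a_2)}{48(4b_2b_3-a_2b_3-3a_2a_3)b_2^2}$, and shows the remaining quadratic $pa_1^2+qa_1+r$ in $a_1$ is non-negative by observing that $p=b_2^2(5b_2b_3-b_2a_3-a_2b_3-3a_2a_3)^2$ is a perfect square, minimizing over $a_1$, and checking that the minimum value is non-negative. You instead interpose the point $V_1\left(\frac{b_1a_2}{b_2}\right)$: the step $V_1(q_1)\le V_1\left(\frac{b_1a_2}{b_2}\right)$ is free, since $V_1$ is a strictly convex quadratic with unconstrained minimizer $q_1$, and the step $V_1\left(\frac{b_1a_2}{b_2}\right)\le V_4\left(\frac{b_1a_2}{b_2}\right)$ follows from the factored identity for $V_1-V_4$ (the two functions differ only in their left-child summands, so this is the same computation as $V_2-V_3$) by checking the sign of each factor; the Case~2 hypothesis $b_1a_2^2>a_1b_2^2$ supplies exactly the needed negativity of $a_1b_2-c_1a_2$ at $c_1=\frac{b_1a_2}{b_2}$, while $c_1-a_1\ge 0$ comes from Lemma~\ref{lem91}. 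Your treatment of the equality matches the paper's (it is immediate either from the vanishing factor $c_1b_2-b_1a_2$ or from direct substitution), and there is no circularity in borrowing the difference formulas from the proof of Theorem~\ref{globalconvexity}, since those are standalone polynomial identities. What the paper's computation buys is an explicit closed form for the gap and stylistic symmetry with Proposition~\ref{lemAlessH}; what yours buys is brevity, a transparent view of where the Case~2 hypothesis enters, and reuse of identities the paper needs anyway — indeed the same trick applied to $V_1-V_2$ at $\frac{a_1b_2}{a_2}$ would simplify Proposition~\ref{lemAlessH} as well.
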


\begin{proof}{}

It is easy to check that $V_4\left(\frac{b_1a_2}{b_2}\right) = V_3\left(\frac{b_1a_2}{b_2}\right)$.

\begin{dmath*}
V_4\left(\frac{b_1a_2}{b_2}\right) - V_1(q_1) = \frac{(b_3-a_3)(b_2-a_2)}{48(4b_2b_3-a_2b_3-3a_2a_3)b_2^2} \times \left (pa_1^2 + qa_1 + r \right),
\end{dmath*}

\noindent where
\begin{align*}
p&=b_2^2(5b_2b_3-b_2a_3-a_2b_3-3a_2a_3)^2,\\[1.2em]
q&=8b_1b_2(6a_2^2a_3+2a_2^2b_3-3a_2b_2a_3-9a_2b_2b_3+b_2^2a_3+3b_2^2b_3)(b_2b_3-a_2a_3),\\[1.2em]
r&=16b_1^2(-3a_2^3a_3-a_2^3b_3+3a_2^2b_2a_3+5a_2^2b_2b_3-a_2b_2^2a_3-4a_2b_2^2b_3+b_2^3b_3)\\& \qquad\qquad\qquad\qquad\qquad\qquad\qquad\qquad\qquad\qquad\qquad\qquad\qquad\times(b_2b_3-a_2a_3).
\end{align*}

To show this is non-negative for all parameters satisfying \ref{Omega}, we will show $pa_1^2 + qa_1 + r \geq 0$ for all parameters satisfying \ref{Omega}.

Firstly, we observe that
$$p=b_2^2(5b_2b_3-b_2a_3-a_2b_3-3a_2a_3)^2 \geq 0.$$
From this we know that $pa_1^2 + qa_1 + r$ is a convex function in $a_1$, and we can find the minimizer by setting the derivative to zero and solving for $a_1$.  The minimum occurs at
$$a_1 = \frac{4b_1(6a_2^2a_3+2a_2^2b_3-3a_2b_2a_3-9a_2b_2b_3+b_2^2a_3+3b_2^2b_3)(a_2a_3-b_2b_3)}{b_2(3a_2a_3+a_2b_3+b_2a_3-5b_2b_3)^2}.$$

Substituting this in to $pa_1^2 + qa_1 + r$, we obtain that the minimum value of this quadratic is:
$$\frac{16b_1^2(b_3-a_3)(b_2-a_2)^3(b_2b_3-a_2a_3)(3a_2a_3+a_2b_3-4b_2b_3)^2}{(3a_2a_3+a_2b_3+b_2a_3-5b_2b_3)^2},$$

\noindent which is non-negative for all parameters satisfying \ref{Omega}.  Therefore $pa_1^2 + qa_1 + r \geq 0$, and consequently, $V_4\left(\frac{b_1a_2}{b_2}\right) - V_1(q_1) \geq 0$, as required.

\qed \end{proof}

\begin{lemma}
Given that the upper- and lower-bound parameters respect the labeling \ref{Omega}, and $\frac{b_1a_2}{b_2} > \frac{a_1b_2}{a_2}$,
\begin{equation*}
V_1\left(\frac{a_1b_2}{a_2}\right)=V_4\left(\frac{a_1b_2}{a_2}\right) \geq V_4\left(\frac{b_1a_2}{b_2}\right) = V_3\left(\frac{b_1a_2}{b_2}\right).
\end{equation*}
\end{lemma}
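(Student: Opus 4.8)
The plan is to follow the template of the preceding (Case~1) lemma. First I would dispose of the two equalities, both of which are immediate from identities already recorded in the proof of Theorem~\ref{globalconvexity}. Since $V_1(c_1) - V_4(c_1) = \frac{(b_3-a_3)^2(c_1-a_1)(b_2-a_2)(a_1b_2 - c_1a_2)}{12}$, the factor $a_1b_2 - c_1a_2$ vanishes at $c_1 = \frac{a_1b_2}{a_2}$, giving $V_1\!\left(\frac{a_1b_2}{a_2}\right) = V_4\!\left(\frac{a_1b_2}{a_2}\right)$; and since $V_3(c_1) - V_4(c_1) = \frac{(b_3-a_3)^2(b_1-c_1)(b_2-a_2)(c_1b_2 - b_1a_2)}{12}$, the factor $c_1b_2 - b_1a_2$ vanishes at $c_1 = \frac{b_1a_2}{b_2}$, giving $V_3\!\left(\frac{b_1a_2}{b_2}\right) = V_4\!\left(\frac{b_1a_2}{b_2}\right)$. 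This reduces the lemma to the single inequality $V_4\!\left(\frac{a_1b_2}{a_2}\right) \geq V_4\!\left(\frac{b_1a_2}{b_2}\right)$.

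For that inequality I would invoke two facts established while proving Theorem~\ref{x1flowchart}: $V_4(c_1)$ is a convex quadratic in $c_1$ with leading coefficient $\kappa := \frac{(b_3-a_3)(b_2-a_2)(b_2b_3-a_2a_3)}{3} \geq 0$, and its minimizer is the interval midpoint $q_2 = \frac{a_1+b_1}{2}$. Hence $V_4(c_1) = V_4(q_2) + \kappa(c_1-q_2)^2$, and a difference-of-squares factorization gives
\[
V_4\!\left(\tfrac{a_1b_2}{a_2}\right) - V_4\!\left(\tfrac{b_1a_2}{b_2}\right) = \kappa\left(\tfrac{a_1b_2}{a_2} - \tfrac{b_1a_2}{b_2}\right)\left(\tfrac{a_1b_2}{a_2} + \tfrac{b_1a_2}{b_2} - a_1 - b_1\right).
\]
The first parenthesis equals $\frac{a_1b_2^2 - b_1a_2^2}{a_2b_2}$, which is strictly negative precisely because we are in Case~2, i.e.\ $\frac{b_1a_2}{b_2} > \frac{a_1b_2}{a_2}$ (which in particular forces $a_2>0$). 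The second parenthesis simplifies to $(b_2-a_2)\cdot\frac{a_1b_2 - a_2b_1}{a_2b_2}$, which is non-positive since \ref{Omega} gives $\frac{a_1}{b_1} \leq \frac{a_2}{b_2}$ (Lemma~\ref{lem91}). A product of two non-positive numbers times $\kappa \geq 0$ is non-negative, so $V_4\!\left(\frac{a_1b_2}{a_2}\right) \geq V_4\!\left(\frac{b_1a_2}{b_2}\right)$; explicitly, the difference equals $\frac{(b_3-a_3)(b_2-a_2)^2(b_2b_3-a_2a_3)(a_1b_2^2 - b_1a_2^2)(a_1b_2 - a_2b_1)}{3a_2^2b_2^2} \geq 0$.

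I do not expect a genuine obstacle: every ingredient (the two $V_i - V_4$ identities, convexity and the midpoint minimizer of $V_4$, and Lemma~\ref{lem91}) is already available, and the remaining work is sign-bookkeeping. The only point needing a little care is keeping the orientation of the two factors in the difference of squares straight, and noticing that the Case~2 hypothesis is exactly what pins down the sign of $a_1b_2^2 - b_1a_2^2$. As an alternative that avoids even this, one can simply expand $V_4\!\left(\frac{a_1b_2}{a_2}\right) - V_4\!\left(\frac{b_1a_2}{b_2}\right)$ directly with a computer algebra system and factor the result, mirroring verbatim the proof given for the Case~1 lemma; this is the most mechanical route and is in keeping with the style of the rest of the Appendix.
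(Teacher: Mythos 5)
Your proposal is correct and follows essentially the same route as the paper: both reduce the lemma to the single inequality $V_4\left(\frac{a_1b_2}{a_2}\right) \geq V_4\left(\frac{b_1a_2}{b_2}\right)$ and verify it via the explicit factored difference $\frac{(b_3-a_3)(b_2-a_2)^2(b_1a_2^2-a_1b_2^2)(b_1a_2-a_1b_2)(b_2b_3-a_2a_3)}{3a_2^2b_2^2} \geq 0$, which matches your expression after flipping the signs of two factors. Your derivation of that factorization from the vertex form of the convex quadratic $V_4$ is a slightly more transparent way to obtain what the paper simply states as a computed identity, but it is the same argument.
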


\begin{proof}{}
It is easy to check that $V_1\left(\frac{a_1b_2}{a_2}\right)=V_4\left(\frac{a_1b_2}{a_2}\right)$ and $V_4\left(\frac{b_1a_2}{b_2}\right) = V_3\left(\frac{b_1a_2}{b_2}\right)$.

Furthermore,
\begin{align*}
&V_4\left(\frac{a_1b_2}{a_2}\right)-V_4\left(\frac{b_1a_2}{b_2}\right)\\ &\qquad\qquad\;\; = \frac{(b_3-a_3)(b_2-a_2)^2(b_1a_2^2-a_1b_2^2)(b_1a_2-a_1b_2)(b_2b_3-a_2a_3)}{3a_2^2b_2^2} \geq 0,
\end{align*}
as required.

\qed \end{proof}

 \begin{lemma}
 Given that the parameters satisfy the conditions \ref{Omega}, and furthermore, $\frac{b_1a_2}{b_2} \leq \frac{a_1b_2}{a_2}$, we have $$q_1 \geq \frac{b_1a_2}{b_2}.$$
 \end{lemma}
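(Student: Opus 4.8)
The plan is to prove the inequality $q_1 \geq \frac{b_1a_2}{b_2}$ directly by reducing it to a polynomial inequality in the bound parameters and then exploiting the case hypothesis $\frac{b_1a_2}{b_2}\leq\frac{a_1b_2}{a_2}$, which by \ref{Omega} is equivalent to $b_1a_2^2 \leq a_1b_2^2$. First I would record the sign of the denominator of $q_1$: the quantity $3a_2a_3 + a_2b_3 - 4b_2b_3$ appearing in \eqref{q1} is negative, since $3a_2a_3 \leq 3b_2b_3$ and $a_2b_3 \leq b_2b_3$ give $3a_2a_3 + a_2b_3 \leq 4b_2b_3$ (with strictness from $b_i>a_i$), i.e. $2(3a_2a_3+a_2b_3-4b_2b_3)<0$. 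Consequently, clearing the denominator in $q_1 \geq \frac{b_1a_2}{b_2}$ reverses the inequality, and after also clearing the positive factor $b_2$ the claim becomes a polynomial inequality of the form $P(a_1,b_1,a_2,b_2,a_3,b_3) \leq 0$ (equivalently $\geq 0$ after sign bookkeeping), where $P$ is the numerator of $q_1$ times $b_2$ minus $b_1 a_2$ times the (negated) denominator.

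Next I would expand $P$ and try to factor it. Based on the structure of the analogous identities in the Appendix (e.g.\ the factorizations appearing in Lemma~\ref{HlessB} and Proposition~\ref{lemAlessH}), I expect $P$ to factor with a visible factor of $(b_3-a_3)$ or $(b_2-a_2)$ and a residual factor that is, up to sign, a positive combination of $b_1a_2^2$ and $a_1b_2^2$ — or more precisely a multiple of $(a_1b_2^2 - b_1a_2^2)$ plus manifestly nonnegative terms. The case hypothesis $a_1b_2^2 \geq b_1a_2^2$ is exactly what should make the residual factor have the right sign. So the key algebraic step is: perform the expansion, pull out the obvious nonnegative monomial/difference factors, and check that the remaining factor is nonnegative precisely under $a_1b_2^2 \geq b_1 a_2^2$ together with \ref{Omega} and $b_i > a_i$.

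To keep this rigorous without guessing the factorization, an alternative and probably cleaner route is to use the minimizer interpretation already developed in the proof of Theorem~\ref{x1flowchart}: $q_1$ is the unconstrained minimizer of the strictly convex quadratic $V_1(c_1)$, whose leading coefficient was shown to be positive. Thus $q_1 \geq \frac{b_1a_2}{b_2}$ is equivalent to $V_1'\!\left(\frac{b_1a_2}{b_2}\right) \leq 0$, i.e.\ the quadratic $V_1$ is still decreasing at $c_1 = \frac{b_1a_2}{b_2}$. Computing $V_1'(c_1)$ is routine (it is linear in $c_1$), and evaluating at $\frac{b_1a_2}{b_2}$ gives a polynomial expression whose sign I would analyze; this avoids dividing by the $q_1$ denominator and keeps all manipulations polynomial. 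I would then show $V_1'\!\left(\frac{b_1a_2}{b_2}\right)\leq 0$ by substituting $c_1 = \frac{b_1a_2}{b_2}$, clearing the positive denominator $b_2$, and invoking $a_1b_2^2 \geq b_1a_2^2$, Lemma~\ref{lem91}, and $b_i>a_i$ to sign each resulting term.

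The main obstacle I anticipate is purely the bookkeeping in the final polynomial inequality: after substitution one gets a multivariate polynomial in six parameters, and it is not immediately obvious that it has a sign-definite decomposition. The crux will be finding the right grouping — almost certainly writing it as $(\text{positive}) \cdot (a_1b_2^2 - b_1a_2^2) + (\text{sum of terms that are nonnegative under }\ref{Omega})$ — and then verifying each group's sign using the ordering $\frac{a_1}{b_1}\leq\frac{a_2}{b_2}\leq\frac{a_3}{b_3}$ (equivalently the inequalities $b_1a_2 \geq a_1b_2$, $b_2a_3 \geq a_2b_3$, etc., from Lemma~\ref{lem91}) and $b_i > a_i$. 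If a clean decomposition does not present itself, a fallback is to treat the expression as a linear function of $a_3$ (since $V_1$ is multilinear in the $b_2,b_3,a_3$ parameters up to the $c_1^2$ term) and check the sign at the two extreme values $a_3 = a_3$ and $a_3 = b_3$, reducing to a lower-degree inequality at each endpoint.
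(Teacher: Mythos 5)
Your proposal takes a genuinely different, self-contained route from the paper's, which disposes of this lemma in two lines using facts established elsewhere: by \eqref{minordering} (which follows from the explicit computation \eqref{q1more}) one has $q_1 \geq q_2 = \tfrac{a_1+b_1}{2}$, and the contradiction argument inside the proof of Theorem~\ref{upperbound} (the $X$,$Y$ computation) shows $q_2$ can never lie strictly below both $\tfrac{b_1a_2}{b_2}$ and $\tfrac{a_1b_2}{a_2}$, so $q_2 \geq \min\bigl\{\tfrac{b_1a_2}{b_2},\tfrac{a_1b_2}{a_2}\bigr\}$; under the Case-1 hypothesis that minimum is $\tfrac{b_1a_2}{b_2}$ and the lemma follows. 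Your direct reduction is set up correctly: the denominator $2(3a_2a_3+a_2b_3-4b_2b_3)$ is indeed negative, and after clearing it and the factor $b_2$ the claim becomes $a_3C+b_3D\leq 0$, where $C=3a_1a_2b_2-a_1b_2^2+4b_1a_2b_2-6b_1a_2^2$ and $D=a_1a_2b_2-3a_1b_2^2-4b_1b_2^2-2b_1a_2^2+8b_1a_2b_2$ are the coefficients of $a_3$ and $b_3$. Your anticipated difficulty is real — neither $C\leq 0$ nor $D\leq 0$ holds in general, so no termwise signing works — but your fallback (linearity in $a_3$, or more precisely in the ratio $a_3/b_3\in[a_2/b_2,1]$) does close the argument: one computes $C+D=-4(b_2-a_2)(a_1b_2+b_1b_2-2a_2b_1)\leq 0$ and $a_2C+b_2D=-(b_2-a_2)(3a_1a_2b_2+3a_1b_2^2-6b_1a_2^2-4b_1a_2b_2+4b_1b_2^2)\leq 0$, where both bracketed quantities are nonnegative precisely because of the Case-1 hypothesis $b_1a_2^2\leq a_1b_2^2$ (together with $a_2^2+b_2^2\geq 2a_2b_2$ and $b_1>a_1$); since the affine function $t\mapsto tC+D$ is nonpositive at $t=a_2/b_2$ and at $t=1$, it is nonpositive at $t=a_3/b_3$, which lies between them by \ref{Omega} and Lemma~\ref{lem91}. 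So your route works, and what it buys is independence from the forward reference to Theorem~\ref{upperbound}; what it costs is that, as written, the decisive sign analysis is only a roadmap (``I expect $P$ to factor\dots''), and the single-factor decomposition you first hope for does not exist — you genuinely need the two-endpoint argument to finish.
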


 \begin{proof}

 From the proof of Theorem \ref{upperbound}, we know that the midpoint, $q_2$, cannot be less than \emph{both} $\frac{a_1b_2}{b_1}$ and $\frac{b_1a_2}{b_2}$.  Therefore we have: $$q_2 \geq \min\left \{\frac{a_1b_2}{b_1},\frac{b_1a_2}{b_2}\right \},$$ and because we saw in \ref{minordering} that $q_1 \geq q_2$ we also have $$q_1 \geq \min\left \{\frac{a_1b_2}{b_1},\frac{b_1a_2}{b_2}\right\}.$$  Therefore, under the conditions of the lemma, $q_1 \geq \frac{b_1a_2}{b_2}$ as required.

 \qed \end{proof}

  \begin{lemma}
  Given that the parameters satisfy the conditions \ref{Omega}, and furthermore, $\frac{b_1a_2}{b_2} \geq \frac{a_1b_2}{a_2}$, we have $$q_1 \geq \frac{a_1b_2}{a_2}.$$
 \end{lemma}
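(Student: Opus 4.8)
Proof proposal for Lemma \ref{c2q1} ($q_1 \geq \frac{a_1b_2}{a_2}$ in Case 2).

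The plan is to mirror, as closely as possible, the argument already used for Lemma \ref{c1q1}, since the two lemmas are symmetric counterparts corresponding to the two orderings of $\frac{b_1a_2}{b_2}$ and $\frac{a_1b_2}{a_2}$. First I would recall the key inequality chain \ref{minordering} established in the proof of Theorem \ref{x1flowchart}, namely $q_1 \geq q_2 = \frac{a_1+b_1}{2} \geq q_3$, which holds under \ref{Omega} regardless of which case we are in (it follows from \eqref{q1more} and \eqref{q3less}, using only $b_i > a_i$ and Lemma \ref{lem91}). This reduces the task to showing that the midpoint $q_2$ already satisfies $q_2 \geq \frac{a_1b_2}{a_2}$ whenever we are in Case 2.

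Next I would invoke the contradiction argument embedded in the proof of Theorem \ref{upperbound}: there it is shown that one cannot simultaneously have $\frac{a_1+b_1}{2} < \frac{b_1a_2}{b_2}$ and $\frac{a_1+b_1}{2} < \frac{a_1b_2}{a_2}$. (Setting $X := b_2-a_2 \geq 0$ and $Y := b_1a_2 - a_1b_2 \geq 0$ via Lemma \ref{lem91}, the two strict inequalities rearrange to $Y > b_1 X$ and $Y < a_1 X$, which is impossible since $a_1 < b_1$.) Hence $q_2 = \frac{a_1+b_1}{2} \geq \min\left\{\frac{b_1a_2}{b_2}, \frac{a_1b_2}{a_2}\right\}$. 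In Case 2 we have by hypothesis $\frac{b_1a_2}{b_2} > \frac{a_1b_2}{a_2}$, so the minimum of the two quantities is $\frac{a_1b_2}{a_2}$, giving $q_2 \geq \frac{a_1b_2}{a_2}$. Combining with $q_1 \geq q_2$ from \ref{minordering} yields $q_1 \geq \frac{a_1b_2}{a_2}$, as required.

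I do not expect any serious obstacle here: the lemma is essentially a bookkeeping consequence of two facts already proved elsewhere in the paper (the midpoint ordering \ref{minordering} and the "midpoint cannot be below both breakpoints" claim from Theorem \ref{upperbound}). The only point requiring a little care is making sure the direction of the case hypothesis is used correctly when selecting which breakpoint is the smaller one — in Case 2 it is $\frac{a_1b_2}{a_2}$, precisely the bound we want, so the argument goes through cleanly. One could alternatively give a direct algebraic proof by clearing denominators in $q_1 - \frac{a_1b_2}{a_2} \geq 0$ and factoring, but the route via \ref{minordering} and Theorem \ref{upperbound} is shorter and avoids a messy rational expression.
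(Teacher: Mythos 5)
Your proposal is correct and follows essentially the same route as the paper: the paper's proof of this lemma simply cites the fact, established in the proof of Lemma~\ref{c1q1} (via the contradiction argument from Theorem~\ref{upperbound} and the ordering~\ref{minordering}), that $q_1 \geq \min\left\{\frac{a_1b_2}{a_2},\frac{b_1a_2}{b_2}\right\}$, and then observes that under the Case~2 hypothesis this minimum is $\frac{a_1b_2}{a_2}$. You have merely unpacked those cited steps explicitly, which is fine.
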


\begin{proof}

We saw in the proof of Lemma \ref{c1q1} that $$q_1 \geq \min\left \{\frac{a_1b_2}{b_1},\frac{b_1a_2}{b_2}\right\}.$$  Therefore, under the conditions of the lemma, $q_1 \geq \frac{a_1b_2}{a_2}$ as required.

 \qed \end{proof}

For completeness, we state and give proofs of two very simple lemmas (from \cite{SpeakmanLee2015}) which we used several times.

\begin{lemma}[Lemma 10.1 in \cite{SpeakmanLee2015}]
\label{lem91}
For all choices of  parameters $0\leq a_i < b_i$ satisfying \ref{Omega}, we have: $b_1a_2-a_1b_2\geq 0$, $b_1a_3-a_1b_3 \geq 0$ and $b_2a_3-a_2b_3 \geq 0$.
\end{lemma}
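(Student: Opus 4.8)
The statement to prove is Lemma~\ref{lem91}: for all parameters $0\le a_i<b_i$ satisfying \ref{Omega}, we have $b_1a_2-a_1b_2\ge 0$, $b_1a_3-a_1b_3\ge 0$, and $b_2a_3-a_2b_3\ge 0$.

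The plan is to reduce everything to the chain of ratio inequalities $\frac{a_1}{b_1}\le\frac{a_2}{b_2}\le\frac{a_3}{b_3}$, which the text has already noted is equivalent to the second line of \ref{Omega} under the non-negativity hypothesis. First I would treat the case where all the $b_i$ are strictly positive (which is guaranteed, since $b_i>a_i\ge 0$ forces $b_i>0$). Then, from $\frac{a_1}{b_1}\le\frac{a_2}{b_2}$, multiplying both sides by the positive quantity $b_1b_2$ gives $a_1b_2\le a_2b_1$, i.e. $b_1a_2-a_1b_2\ge 0$. Similarly $\frac{a_2}{b_2}\le\frac{a_3}{b_3}$ multiplied by $b_2b_3>0$ yields $b_2a_3-a_2b_3\ge 0$. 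Finally, chaining the two ratio inequalities gives $\frac{a_1}{b_1}\le\frac{a_3}{b_3}$, and multiplying by $b_1b_3>0$ gives $b_1a_3-a_1b_3\ge 0$. That is the whole argument.

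The only subtlety — and it is minor — is justifying the passage from the additive form of \ref{Omega} (the inequalities on $a_1b_2b_3+b_1a_2a_3$, etc.) to the ratio form. The excerpt asserts this equivalence follows from the non-negativity of the bounds; if one wants a self-contained proof one can instead argue directly from the additive inequalities: from $a_1b_2b_3+b_1a_2a_3\le b_1a_2b_3+a_1b_2a_3$ one factors to get $(b_3-a_3)(b_1a_2-a_1b_2)\ge 0$, and since $b_3>a_3$ this forces $b_1a_2-a_1b_2\ge 0$; analogously $b_1a_2b_3+a_1b_2a_3\le b_1b_2a_3+a_1a_2b_3$ factors as $(b_1a_2-a_1b_2)(a_3-b_3)\le\dots$ — more cleanly, one combines them to extract $b_2a_3-a_2b_3\ge 0$ and then $b_1a_3-a_1b_3\ge 0$ by addition, using $b_1a_3-a_1b_3 = \frac{b_2}{?}$-type manipulations or simply $b_1b_3(a_3/b_3 - a_1/b_1)$. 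I do not expect any real obstacle here; the statement is essentially a restatement of the hypotheses, and the proof is a few lines of elementary algebra with signs.

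In writing this up I would keep it to roughly half a page: state that $b_i>0$, invoke (or re-derive in one line) the ratio form of \ref{Omega}, and then cross-multiply three times. No computer algebra or case analysis beyond what is described above is needed.
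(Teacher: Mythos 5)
Your argument lands in the right place, but the logical order of your primary route is backwards relative to the paper: you derive the three sign conditions by cross-multiplying the ratio chain $\frac{a_1}{b_1}\le\frac{a_2}{b_2}\le\frac{a_3}{b_3}$, yet the paper justifies that ratio form \emph{by} Lemma~\ref{lem91} (``This follows from Lemma~\ref{lem91} \dots and that $b_i>0$''). So taking the ratio form as given when proving the lemma is circular in the paper's development, and the ``minor subtlety'' you flag at the end is in fact the entire content of the proof. Fortunately, the self-contained fallback you sketch is exactly the paper's argument: the first inequality of \ref{Omega} rearranges to $(b_3-a_3)(b_1a_2-a_1b_2)\ge 0$, and $b_3>a_3$ then forces $b_1a_2-a_1b_2\ge 0$.

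Your treatment of the remaining two claims is garbled, however: the expression ``$(b_1a_2-a_1b_2)(a_3-b_3)\le\dots$'' is not a correct factorization of the second \ref{Omega}-inequality, and no addition of ratios is needed. The clean statements are: the second inequality of \ref{Omega} rearranges to $(b_1-a_1)(b_2a_3-a_2b_3)\ge 0$, giving $b_2a_3-a_2b_3\ge 0$ since $b_1>a_1$; and chaining the two inequalities of \ref{Omega} gives $a_1b_2b_3+b_1a_2a_3\le b_1b_2a_3+a_1a_2b_3$, which rearranges to $(b_2-a_2)(b_1a_3-a_1b_3)\ge 0$, giving $b_1a_3-a_1b_3\ge 0$ since $b_2>a_2$. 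With these three factorizations written out (and the ratio form dropped, or derived afterwards as a corollary), your proof coincides with the paper's.
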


\begin{proof}
$(b_3-a_3)(b_1a_2-a_1b_2) = b_1a_2b_3 + a_1b_2a_3 - a_1b_2b_3 -b_1a_2a_3 \geq 0$
 by \ref{Omega}.
This implies $b_1a_2-a_1b_2 \geq 0$, because $b_3-a_3 > 0$.
$b_1a_3-a_1b_3 \geq 0$ and $b_2a_3-a_2b_3 \geq 0$ follow from \ref{Omega} in a similar way.
\qed
\end{proof}

\begin{lemma}[Lemma 10.4 in \cite{SpeakmanLee2015}]
\label{lem94}
Let $A,B,C,D \in \R$ with $A \geq B\geq 0$, $C+D \geq 0$, $C \geq 0$.  Then $AC+BD \geq0$.
\end{lemma}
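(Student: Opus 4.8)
The plan is to find an algebraic identity that rewrites $AC+BD$ as a sum of terms, each of which is manifestly a product of two nonnegative reals. The natural device is to separate off the ``gap'' $A-B$: namely
\[
AC+BD = (A-B)C + BC + BD = (A-B)C + B(C+D),
\]
an identity one checks instantly by expanding the right-hand side.

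From here I would just verify signs term by term. The hypothesis $A \geq B \geq 0$ gives $A-B \geq 0$, and together with $C \geq 0$ this yields $(A-B)C \geq 0$. Likewise $B \geq 0$ together with $C+D \geq 0$ gives $B(C+D) \geq 0$. Since $AC+BD$ is the sum of these two nonnegative quantities, we conclude $AC+BD \geq 0$, as required.

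There is essentially no real obstacle here; the only ``choice'' is which decomposition to use, and the one above is the slickest. For the record, one could instead argue by cases on the sign of $D$: if $D \geq 0$ then $AC \geq 0$ and $BD \geq 0$ directly; if $D < 0$ then $0 < -D \leq C$ (from $C+D\geq 0$), hence $BD = -B(-D) \geq -BC \geq -AC$ using $B \leq A$ and $C \geq 0$, so again $AC+BD \geq 0$. The single identity $AC+BD = (A-B)C + B(C+D)$ packages both cases at once, so that is the route I would present.
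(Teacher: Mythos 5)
Your argument is correct and is essentially the paper's own proof: the paper writes $AC+BD \geq B(C+D) \geq 0$, and the first inequality there is exactly your observation that $(A-B)C \geq 0$, so your identity $AC+BD = (A-B)C + B(C+D)$ is just that one-line chain made explicit.
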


\begin{proof}
$AC+BD \geq B(C+D) \geq 0$.
\qed
\end{proof}}


\end{document}